\newtheorem{theorem}{Theorem}
\newtheorem*{remark}{Remark}
\begin{document}

\begin{frontmatter}
\title{
Separable Physics-informed Neural Networks for Solving the BGK Model of the Boltzmann Equation.
}

\author[label1]{Jaemin Oh}
\ead{jaemin_oh@kaist.ac.kr}
\affiliation[label1]{
organization={Department of Mathematical Sciences, Korea Advanced Institute of Science and Technology (KAIST)},
city={Daejeon},
postcode={34141},
country={Republic of Korea}
}

\author[label2]{Seung Yeon Cho}
\ead{chosy89@gnu.ac.kr}
\affiliation[label2]{
organization={Department of Mathematics, Gyeongsang National University},
city={Jinju},
postcode={52828},
country={Republic of Korea}
}

\author[label3]{Seok-Bae Yun}
\ead{sbyun01@skku.edu}
\affiliation[label3]{
organization={Department of Mathematics, Sungkyunkwan University},
city={Suwon},
postcode={16419},
country={Republic of Korea}
}

\author[label4]{Eunbyung Park}
\ead{epark@skku.edu}
\affiliation[label4]{
organization={Department of Electrical and Computer Engineering, Sungkyunkwan University},
city={Suwon},
postcode={16419},
country={Republic of Korea}
}

\author[label1]{Youngjoon Hong}
\ead{hongyj@kaist.ac.kr}

\begin{abstract}
In this study, we introduce a method based on Separable Physics-Informed Neural Networks (SPINNs) for effectively solving the BGK model of the Boltzmann equation.
While the mesh-free nature of PINNs offers significant advantages in handling high-dimensional partial differential equations (PDEs), challenges arise when applying quadrature rules for accurate integral evaluation in the BGK operator, which can compromise the mesh-free benefit and increase computational costs.
To address this, we leverage the canonical polyadic decomposition structure of SPINNs and the linear nature of moment calculation, achieving a substantial reduction in computational expense for quadrature rule application.
The multi-scale nature of the particle density function poses difficulties in precisely approximating macroscopic moments using neural networks.
To improve SPINN training, we introduce the integration of Gaussian functions into SPINNs, coupled with a relative loss approach.
This modification enables SPINNs to decay as rapidly as Maxwellian distributions, thereby enhancing the accuracy of macroscopic moment approximations.
The relative loss design further ensures that both large and small-scale features are effectively captured by the SPINNs.
The efficacy of our approach is demonstrated through a series of five numerical experiments, including the solution to a challenging 3D Riemann problem. These results highlight the potential of our novel method in efficiently and accurately addressing complex challenges in computational physics.
\end{abstract}

\end{frontmatter}

\section{Introduction}
The Boltzmann equation fundamentally characterizes the temporal evolution of particle density functions, predicated on the binary collision model.
This equation diverges from traditional fluid dynamics equations by its capacity to encapsulate dynamics that extend beyond the continuum regime, offering a broader applicative scope \cite{villani2002review, lewis1984computational, markowich2012semiconductor, hazeltine2018framework}.
However, the practical utility of the Boltzmann equation is significantly constrained by the computational intensity inherent in its high-dimensional collision operator.
This complexity imposes substantial computational demands, thereby limiting its application across a diverse spectrum of scientific fields \cite{mieussens2000discrete, filbet2010class, dimarco2014numerical, romano2021openbte}.

Much effort has been directed towards the development of numerical methods for simulating kinetic equations. Among these, the Direct Simulation Monte Carlo (DSMC) method \cite{bird1994molecular}, employing a stochastic framework for the direct resolution of the Boltzmann equation, is distinguished by its computational efficiency.
Nevertheless, the accuracy of DSMC is somewhat undermined by the inherent presence of statistical noise and pronounced oscillations.
In contrast, deterministic methodologies, exemplified by the Fourier spectral method \cite{pareschi1996fourier,pareschi2000numerical,mouhot2006fast, liu2024convergence} and the discrete velocity model \cite{mieussens2000discrete}, demonstrate superior accuracy, albeit with a concomitant increase in computational load.
A notable advancement in this domain is documented in \cite{dimarco2013towards, dimarco2018efficient}, where the BGK model of the Boltzmann equation and the full Boltzmann equation were efficiently resolved within a three-dimensional spatial domain, albeit necessitating extensive parallelization of computational resources. 
This highlights the complexity inherent in the numerical simulation of kinetic equations, especially in devising feasible numerical schemes that simultaneously reduce computational costs.
Consequently, the quest for an efficient and practical numerical method emerges as a formidable challenge, requiring rigorous theoretical development and strategic application approaches.

Recent developments have led to the application of neural networks in reducing computational complexities associated with kinetic equation simulations.
These efforts are broadly divided into two categories.
The first is a data-driven approach \cite{porteous2021data, alekseenko2022fast, xiao2023relaxnet}, akin to reduced-order models \cite{tsai2023accelerating}.
In this method, neural networks, trained with high-fidelity numerical solutions (data), are used to replace the most computationally demanding elements of the simulation.
For example, Han et al. \cite{han2019uniformly} utilized neural networks to tackle the moment equation by learning the moment closure relation.
In a similar vein, Miller et al. \cite{miller2022neural} introduced a pre-trained surrogate neural network for the entire Boltzmann collision operator, aiming to reduce computational expenses.
However, these methods require a substantial number of input-output pairs for an accurate approximation of the true relationship, posing challenges in situations where data collection is difficult or expensive.

The second approach to reducing computational complexity is the data-free method, with Physics-Informed Neural Networks (PINNs) \cite{raissi2019physics, lu2021physics, lee2023oppinn, muller2023achieving, wang2023expert, zhou2023physics} being a prominent example.
PINNs model the solution of partial differential equations (PDEs) using neural networks, integrating PDE residuals, initial conditions, and boundary conditions as penalty terms.
The inherent advantage of PINNs, not requiring structured grids, positions them as a promising tool for efficiently solving high-dimensional PDEs. 
A significant advancement in this domain is demonstrated in \cite{hu2023tackling, hu2023hutchinson}, where PDEs of extremely high dimensions (up to $10^5 d$) were solved by introducing stochastic elements in the dimensions for gradient descent, extending beyond collocation points.
However, this method did not encompass integro-differential equations (IDEs), limiting its applicability to the BGK model.
A notable challenge with IDEs is that the use of quadrature rules for integral evaluation partially compromises the grid-free nature of PINNs, involving a substantially higher number of network forward passes.

In this study, we introduce ``SPINN-BGK'', a novel machine-learning framework to effectively solve the BGK model.
To overcome the challenges previously outlined, we have adopted the framework of Separable PINNs \cite{cho2024separable}, distinguished by their canonical polyadic decomposition structure.
This methodology, combined with a novel integration strategy that alters the conventional order of integrals and summation, markedly diminishes both memory and computational overheads.
Consequently, the application of Separable PINNs in conjunction with this integration technique has enabled the effective resolution of the BGK model in a three-dimensional spatial domain on a single GPU. However, computational complexity is not the sole challenge.
PINNs have shown limitations in accurately approximating macroscopic quantities such as density, velocity, and temperature.
This issue arises, in part, due to the characteristics of neural networks, which, without specific modifications, fail to replicate the rapid decay in the velocity domain exhibited by the particle density function.
To rectify this, we have integrated Gaussian functions of microscopic velocity into the neural network architecture.
Furthermore, we utilized a relative $L^2$ loss function to ensure a balanced approximation of features across varying magnitudes.
Additionally, we decomposed the neural network into two components: an equilibrium part representing the Maxwellian distribution, and a non-equilibrium part, as in \cite{bennoune2008uniformly,jin2010micro}.
This decomposition is inspired by the BGK operator's role in driving the system towards local thermodynamic equilibrium.
Collectively, these strategies have enabled our proposed neural networks to rapidly decay along the velocity domain, effectively approximate features of diverse magnitudes, and expedite training, thereby yielding accurate macroscopic moments.

Diverse machine learning methodologies have recently been applied to tackle the complexities of the BGK model.
Lou et al. \cite{lou2021physics} investigated both forward and inverse problems within the framework of PINNs, specifically targeting flows across a range of Knudsen numbers. However, their approach was primarily limited to the lattice Boltzmann method, employing a comparatively coarse lattice for the microscopic velocity space.
Another notable attempt by Li et al. \cite{li2023solving} involved the application of canonical polyadic decomposition to the discretized microscopic velocity space, followed by solving the full Boltzmann equation using a singular value decomposition (SVD)-based reduced-order formulation, underpinned by an ansatz derived from the solution to the BGK model. However, their focus was primarily on the 2D Boltzmann equation.
Our research diverges by concentrating on the BGK model, which is inherently less computationally demanding, enabling us to present a three-dimensional example.
Moreover, by applying tensor decomposition to the spatio-temporal domain, SPINN-BGK achieves enhanced computational efficiency, particularly for the BGK equation.
Beyond these computational advancements, we also elucidate the potential shortcomings of the PINNs method in accurately generating macroscopic moments and propose a strategy to address these limitations.

The structure of the remaining sections of this paper is organized as follows: \Cref{sec:preliminaries} provides a concise overview of the BGK model and the framework of SPINNs. In \Cref{sec:methodology}, we delve into the detailed exposition of our proposed methodologies.  \Cref{sec:numerical_results} is dedicated to presenting the results derived from both smooth and Riemann problem scenarios. Finally, \Cref{sec:conclusion} offers a comprehensive summary and conclusion of our research findings.

\section{Preliminaries}\label{sec:preliminaries}
\subsection{The BGK Model of the Boltzmann Equation}
Consider a particle characterized by a spatial dimension $d \in \{1,2,3\}$.
At any given time $t$, the position of this particle is denoted as ${\bf x} \in \Omega$, where $\Omega$ is a subset of the Euclidean space $\mathbb{R}^d$.
The particle is capable of moving with a velocity ${\bf v}$, which is an element of $\mathbb{R}^3$. We define $f(t, {\bf x, v})$ as the particle density function in the phase space $\Omega \times \mathbb{R}^3$, representing the distribution of particles at time $t$.

The Boltzmann equation characterizes the time evolution of the particle density function $f$, based on the binary collision assumption.
A significant aspect of this equation is its collision integral, which involves a five-fold computation, often leading to substantial computational demands \cite{dimarco2014numerical}. To simplify this, the binary collision integral is replaced with a relaxation process that approximates the movement towards local thermodynamic equilibrium.
This adaptation results in the Bhatnagar-Gross-Krook (BGK) model \cite{bhatnagar1954model}, which is formulated as follows: 
\begin{equation}\label{eq:bgk}
    \frac{\partial f}{\partial t} + {\bf v} \cdot \nabla_{\bf x} f 
    = \frac{1}{\mathrm{Kn}} \left( M[f] - f\right).
\end{equation}
The Knudsen number, denoted as $\mathrm{Kn}$, serves as a dimensionless parameter, representing the ratio of the mean free path of a particle to the physical length scale of interest. In this context, we assume a fixed collision frequency. The Maxwellian distribution, denoted as $M[f]$, is defined as the particle density at local thermodynamic equilibrium. This distribution is characterized by
\[
    M[f](t,{\bf x,v}) = \frac{\rho(t,{\bf x})}{\left(2\pi T(t,{\bf x})\right)^{3/2}}e^{-\frac{|{\bf v} - {\bf u}(t,{\bf x})|^2}{2T(t,{\bf x})}},
\]
where the macroscopic mass $\rho$, velocity ${\bf u}$, and temperature $T$ are defined by
\begin{align*}
    &\rho(t,{\bf x}) = \int f(t,{\bf x, v}) d{\bf v}, \\
    &{\bf u}(t,{\bf x})
    =
    \begin{pmatrix}
      u_x(t,{\bf x}) \\ u_y(t,{\bf x}) \\ u_z(t,{\bf x})  
    \end{pmatrix}
    = \frac{1}{\rho(t,{\bf x})}\int
    \begin{pmatrix}
    v_x \\ v_y \\ v_z    
    \end{pmatrix}
    f(t,{\bf x, v}) d{\bf v}, \\
    &T(t,{\bf x})
    = \frac{1}{3}\left(
    \frac{1}{\rho(t,{\bf x})}\int |{\bf v}|^2 f(t,{\bf x, v}) d{\bf v}- 
    |{\bf u }(t,{\bf x})|^2 
    \right).
\end{align*}
Note that all the integrals mentioned above involve triple integration over $\mathbb{R}^3$.
As a result, although there are various numerical methods available for solving the BGK model \cite{mieussens2000discrete,bennoune2008uniformly,pieraccini2007implicit,cho2021conservative2,ding2021semi,xiong2015high,boscheri2020high, gamba2019micro}, they typically face high computational costs.
This challenge arises mainly from the model's high dimensionality and the necessary integrals for calculating the macroscopic moments.

\subsection{Physics-informed Neural Networks}
Consider $\phi : \mathbb{R}^{d_\mathrm{in}} \rightarrow \mathbb{R}^{d_\mathrm{out}}$ as a feed-forward neural network (FNN) comprising $L$ layers, with $d_l$ denoting the number of neurons in the $l$-th layer.
We define $d_0 = d_\mathrm{in}$ and $d_L = d_\mathrm{out}$.
The affine transformation $A_l$ for the $l$-th layer is characterized by a weight matrix $W_l \in \mathbb{R}^{d_l \times d_{l-1}}$ and a bias vector $b_l \in \mathbb{R}^{d_l}$.
Let $\theta := \{ W_l, b_l: 1 \le l \le L\}$ represent the set of network parameters. With an activation function $\sigma(\cdot)$ applied element-wise, the feed-forward neural network, parameterized by $\theta$, is defined as
\[
    \phi(\cdot) = A_L \circ \sigma \circ A_{L-1} \circ \sigma \circ \cdots \circ A_1 (\cdot).
\]
In this context, $\phi$ is often denoted as $\phi_\theta$ to emphasize its dependency on the network parameters $\theta$.

A primary objective of PINNs is to approximate the solution of a PDE using a neural network.
This task essentially translates to identifying an optimal set of network parameters, denoted as $\theta$, such that the neural network $\phi_\theta$ accurately satisfies the stipulations of the given PDE.
To elucidate the conventional methodology for determining these network parameters within the PINN framework, we consider the BGK model in the following specific form: 
\begin{align}
    \mathcal{L}_\mathrm{BGK}[f](t,{\bf x,v}) &= 0, \quad \forall (t,{\bf x,v}) \in (0,\infty)\times \Omega \times \mathbb{R}^3, \label{eq:bgk1} \\
    \mathcal{B}[f](t,{\bf x,v}) &= 0, \quad \forall (t, {\bf x, v}) \in (0, \infty) \times \partial \Omega \times \mathbb{R}^3, \label{eq:bgk_bc}
\end{align}
where $\mathcal{L}_\mathrm{BGK}[\cdot]$ is the integro-differential operator of the BGK model and $\mathcal{B}[\cdot]$ may encompass Dirichlet, Neumann, or periodic boundary conditions depending on the specific requirements of the problem at hand.
Note that this setup is not limited to the BGK model.
Given an activation function $\sigma$, a specified boundary condition $\mathcal{B}$, and an initial condition $f_0$, the objective is to determine a set of network parameters $\theta$. 
For any positive value of $p$, we define three components of the $L^p$ loss functions 
\begin{equation}\label{eq:Lp}
\begin{split}
    \mathcal{L}_r(\theta) &:= \iiint |\mathcal{L}_\mathrm{BGK}[f_\theta](t,{\bf x, v})|^p dtd{\bf x}d{\bf v}, \\
    \mathcal{L}_\mathrm{bc}(\theta) &:= \iiint |\mathcal{B}[f_\theta](t,{\bf x, v})|^p dtd{\bf x}d{\bf v}, \\
    \mathcal{L}_\mathrm{ic}(\theta) &:= \iint |f_\theta(0, {\bf x, v}) - f_0({\bf x,v})|^p d{\bf x}d{\bf v},
\end{split}    
\end{equation}
corresponding to the PDE, boundary condition, and initial condition, respectively. Subsequently, we construct a comprehensive PINN loss function
\begin{equation}\label{eq:pinn_loss}
    \mathcal{L}(\theta) = 
    \lambda_r \mathcal{L}_{r}(\theta)
    + \lambda_{bc}\mathcal{L}_{bc}(\theta)
    + \lambda_{ic} \mathcal{L}_{ic}(\theta).
\end{equation}
In this formulation, $\lambda_r$, $\lambda_\mathrm{bc}$, and $\lambda_\mathrm{ic}$ are positive real numbers, selected based on the empirical evaluation.
The integrals present in \eqref{eq:pinn_loss} are computed using appropriate numerical integration methods.
The optimal network parameters, denoted as $\theta^*$, are determined by solving
\[
    \theta^* = \arg \min_\theta \mathcal{L}(\theta),
\]
employing a suitable optimization algorithm, such as Adam \cite{kingma2014adam}.

\section{Methodology}\label{sec:methodology}
In this section, we address various challenges encountered in the application of PINNs to solve the BGK model.
We also propose a series of methodologies designed to effectively mitigate these difficulties.

\subsection{Methodology for reduction of computational cost}\label{sec:reducing_computational_cost}
Traditional mesh-based numerical solvers for the BGK model often face the challenge of dimensionality, particularly due to the incorporation of the microscopic velocity space.
In contrast, PINNs are not constrained by the need for structured meshes, which initially suggests an advantage in this context.
However, the requirement to compute integrals for macroscopic moments somewhat offsets this perceived benefit.
Furthermore, when dealing with solutions that have complex profiles, PINNs typically demand a large number of collocation points.
Therefore, efficiently solving the BGK model using PINNs demands a reduction in the number of network forward passes and a more rapid evaluation of these integrals.

\subsubsection{Separable Physics-informed Neural Networks}
The computation of macroscopic moments $\rho, {\bf u}, T$ in the context of PINNs significantly increases the number of network forward passes, especially when solving PDEs of similar dimensionality that do not involve such integrals.
Consider $m_{ijk}$ as the $ijk$-th moment, defined by the following expression: 
\begin{equation}\label{eq: moments}
    m_{ijk}(t, {\bf x}) := \int_{\mathbb{R}^3} f(t,{\bf x, v}) v_x^i v_y^j v_z^k d{\bf v}.
\end{equation}
Employing a trapezoidal rule with $N$ points allocated along each axis results in a total of $N^3$ points.
Consequently, for each pair of $(t, {\bf x})$, it becomes necessary to perform $O(N^3)$ evaluations of the function $f$ to accurately compute $m_{ijk}(t, {\bf x})$.

In this study, we utilize the Separable PINN framework \cite{cho2024separable} to effectively reduce the number of network forward passes required.
This approach is grounded in the principles of canonical polyadic (CP) decomposition, a technique known for its efficacy in managing high-dimensional problems \cite{hitchcock1927expression}; for other low-rank methods, see, e.g., \cite{guo2023local, GQ24}.
For instance, CP decomposition has been applied in classical numerical methods for solving the 3D BGK model \cite{boelens2020tensor}.
Consider $\phi$ as a scalar-valued function on $\mathbb{R}^d$.
Let $G$ denote a rectilinear grid in $\mathbb{R}^d$, characterized by $N_i$ points along the $i$-th axis.
It can be demonstrated that, given a sufficiently large value of $R$, there exist functions $\phi_1, \phi_2, \dots, \phi_d: \mathbb{R} \rightarrow \mathbb{R}^R$ such that for every point $(x_1, x_2, \dots, x_d)$ in the grid $G$, the following relationship holds: 
\[
    \phi(x_1, x_2, \dots, x_d) \approx \sum_{r=1}^R \prod_{i=1}^d \phi_{i,r}(x_i).
\]
In this expression, $\phi_{i,r}(x_i)$ denotes the $r$-th element of the vector $\phi_i(x_i)$.
Motivated by this, separable PINNs assign $d$ FNNs $\{ \phi(\cdot; \theta_i): \mathbb{R} \rightarrow \mathbb{R}^R \}_{i=1}^d$ to the $d$ axes of the domain, and then combine them in the same way to approximate $\phi$ at ${\bf x} = (x_1, \dots, x_d)\in \mathbb{R}^d$ by
\[
    \phi({\bf x}) \approx \phi_\theta^\mathrm{SPINN}({\bf x}) = \sum_{r=1}^R \prod_{i=1}^d \phi_r(x_i; \theta_i).
\]
It is important to highlight that for a rectilinear grid configured as $(N_1, N_2, \dots, N_d)$, separable PINNs significantly reduce the number of network forward passes from $O\left( \prod_{i=1}^d N_i \right)$ to $O\left(\sum_{i=1}^d N_i\right)$. This reduction offers a substantial computational advantage when compared to traditional FNNs.
Furthermore, the computation of partial derivatives can be efficiently executed using Jacobian-Vector Products, also known as forward-mode automatic differentiation.

Neural networks, when equipped with a sufficient number of hidden units, are recognized for their ability to approximate any continuous function.
However, the extension of this universal approximation capability to separable PINNs is not immediately obvious.
To address this, and to enhance the rigor of our paper, we provide the universal approximation theorem for separable PINNs (\Cref{thm:uat_spinn_f}).

\begin{theorem}\label{thm:uat_spinn}
Let $X$ and $Y$ be compact subsets of $\mathbb{R}^d$, and $\phi \in L^2 (X\times Y)$.
For $\epsilon >0$, there exists a separable PINN $\phi_\theta$ such that
\[
    \| \phi - \phi_\theta \|_{L^2(X\times Y)} < \epsilon.
\]
\end{theorem}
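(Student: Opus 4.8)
The plan is to exploit the fact that $L^2(X\times Y)$ is the Hilbert-space tensor product $L^2(X)\otimes L^2(Y)$, in which finite-rank (separable) functions are dense, and then to replace each factor by a feed-forward network via the classical universal approximation theorem. Concretely I would first reduce $\phi$ to a finite sum of products, then approximate each factor by an FNN, and finally assemble these into one separable PINN while controlling the accumulated error. For \emph{Step 1 (reduction to finite rank)}, fix orthonormal bases $\{\alpha_i\}$ of $L^2(X)$ and $\{\beta_j\}$ of $L^2(Y)$; then $\{\alpha_i\otimes\beta_j\}$ is an orthonormal basis of $L^2(X\times Y)$ and $\phi=\sum_{i,j}c_{ij}\,\alpha_i\otimes\beta_j$ with $\sum|c_{ij}|^2<\infty$. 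Truncating and relabelling the finitely many surviving terms yields a finite-rank function
\[
    \phi_R(\mathbf{x},\mathbf{y})=\sum_{r=1}^R g_r(\mathbf{x})\,h_r(\mathbf{y})
\]
with $g_r\in L^2(X)$, $h_r\in L^2(Y)$, and $\|\phi-\phi_R\|_{L^2(X\times Y)}<\epsilon/2$. This is exactly the rank-$R$ separable structure underlying a SPINN.

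For \emph{Step 2 (network approximation of each factor)}, note that the $g_r,h_r$ lie only in $L^2$, not necessarily in $C^0$, so the universal approximation theorem cannot be applied directly. I would first use density of continuous functions in $L^2$ on a compact set to replace each $g_r,h_r$ by a continuous function, and then invoke the classical universal approximation theorem for the (non-polynomial) activation $\sigma$ to obtain FNNs $\tilde g_r\colon X\to\mathbb{R}$ and $\tilde h_r\colon Y\to\mathbb{R}$ that approximate them uniformly, hence in $L^2$. For any prescribed tolerance $\delta_r>0$ I can thereby ensure $\|g_r-\tilde g_r\|_{L^2(X)}<\delta_r$ and $\|h_r-\tilde h_r\|_{L^2(Y)}<\delta_r$.

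For \emph{Step 3 (assembly and error control)}, I would use the tensor identity $\|g\otimes h\|_{L^2(X\times Y)}=\|g\|_{L^2(X)}\|h\|_{L^2(Y)}$ together with the telescoping estimate (norms over the respective factor spaces)
\[
    \|g_rh_r-\tilde g_r\tilde h_r\| \le \|g_r\|\,\|h_r-\tilde h_r\| + \|g_r-\tilde g_r\|\,\|\tilde h_r\|,
\]
so that the per-factor errors propagate linearly; since $\|\tilde h_r\|\le\|h_r\|+\delta_r$ is bounded, choosing each $\delta_r$ small enough (depending on $R$ and on $\|g_r\|,\|h_r\|$) forces $\big\|\phi_R-\sum_{r=1}^R\tilde g_r\tilde h_r\big\|_{L^2(X\times Y)}<\epsilon/2$. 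Finally I would stack $(\tilde g_1,\dots,\tilde g_R)$ into a single vector-valued FNN $X\to\mathbb{R}^R$ and likewise the $\tilde h_r$, so that $\phi_\theta=\sum_{r=1}^R\tilde g_r\,\tilde h_r$ is precisely a separable PINN, and the triangle inequality gives $\|\phi-\phi_\theta\|_{L^2(X\times Y)}<\epsilon$. The full multi-axis SPINN (a product over all coordinates) then follows by iterating the same tensor argument on each factor space.

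The main obstacle I anticipate is not conceptual but lies in Steps 2--3: the factors produced by the tensor truncation are merely square-integrable, so one must interpose the $C^0$-density step before applying the universal approximation theorem, and must then track carefully how the $L^2$ errors of the individual networks combine multiplicatively in each product and additively over the $R$ terms. The tensor-norm identity keeps this bookkeeping clean, so the only real care needed is to fix $R$ first and choose the tolerances $\delta_r$ afterwards.
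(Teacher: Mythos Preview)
Your proposal is correct and follows the natural route. The paper does not give its own proof but simply defers to the supplementary material of \cite{cho2024separable}; the argument there is essentially the one you outline---density of finite-rank tensors in $L^2(X)\otimes L^2(Y)$ followed by the classical universal approximation theorem applied to each factor---so your approach matches.
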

\begin{proof}
    See Supplementary Materials A. of \cite{cho2024separable}.
\end{proof}

\begin{remark}
    We note that the universal approximation theorem \cite{cybenko1989approximation} works for one hidden layer network.
    Its extension to an arbitrary, bounded depth network is easy, as outlined in \cite{kidger2020universal}:
    express the $l$-th layer as $B_l \circ \sigma \circ A_l$,
    where $A_l$ and $B_l$ are affine transformations and $\sigma$ is an activation function;
    let $l>1$-th layers approximate the identity function.
\end{remark}

\Cref{thm:uat_spinn} requires modifications to be suitably applied to the particle density function $f$ since it is defined over the microscopic velocity domain $\mathbb{R}^3$, rather than on a compact subset of $\mathbb{R}^3$. 
In light of these considerations, we present a tailored approximation theorem for $f$ as a function defined on the domain $\Omega \times \mathbb{R}^3$, ensuring its compatibility with the unique characteristics of the BGK model.

\begin{theorem}\label{thm:uat_spinn_f}
    For $d \in \{1, 2, 3\}$,
    let $\Omega \subset \mathbb{R}^d$ be a compact set and $f: \Omega \times \mathbb{R}^3 \rightarrow \mathbb{R}$ be a particle density function at a fixed time, and $\phi({\bf v}) = 1 + |{\bf v}|^2$.
    We assume that $f \in L^2(\Omega \times \mathbb{R}^3)$,
    and $\int_{\mathbb{R}^3} f\phi d{\bf v} \in L^2(\Omega)$.
    Then, for $\epsilon >0$, there exists a separable PINN $f_\theta$ such that
    \[
        \| f - f_\theta \|_{L^2(\Omega \times \mathbb{R}^3)} < \epsilon.
    \]
    Moreover, 
    \[
        \left\|\int_{\mathbb{R}^3} (f - f_\theta)\phi d{\bf v}\right\|_{L^2(\Omega)} < \epsilon.
    \]
\end{theorem}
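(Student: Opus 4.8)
The plan is to transfer the statement from the unbounded velocity space $\mathbb{R}^3$ to the compact regime covered by \Cref{thm:uat_spinn}, by truncating the velocity integration to a large ball and by encoding Maxwellian decay directly into the approximant. I will seek $f_\theta$ of the Gaussian-augmented separable form $f_\theta(\mathbf{x},\mathbf{v}) = e^{-\alpha|\mathbf{v}|^2}\,p_\theta(\mathbf{x},\mathbf{v})$ for a fixed $\alpha>0$ and an ordinary separable PINN $p_\theta$. Since $e^{-\alpha|\mathbf{v}|^2}=\prod_{j=1}^{3}e^{-\alpha v_j^2}$ factorizes across the velocity coordinates, each Gaussian can be absorbed into the matching velocity subnetwork, so $f_\theta$ is itself a separable PINN and hence an admissible approximant.

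First I would discard the velocity tails of $f$. Writing $B_M=\{\mathbf{v}:|\mathbf{v}|\le M\}$ and using $f\ge0$, $f\in L^2(\Omega\times\mathbb{R}^3)$ and $\int_{\mathbb{R}^3}f\phi\,d\mathbf{v}\in L^2(\Omega)$, both $\|f\|_{L^2(\Omega\times B_M^c)}$ and $\bigl\|\int_{B_M^c}f\phi\,d\mathbf{v}\bigr\|_{L^2(\Omega)}$ vanish as $M\to\infty$: the former by dominated convergence for $f^2\in L^1$, the latter because $0\le\int_{B_M^c}f\phi\,d\mathbf{v}\downarrow0$ pointwise and is dominated in $L^2(\Omega)$ by $\int_{\mathbb{R}^3}f\phi\,d\mathbf{v}$. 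Fixing $M$ so that each is below $\epsilon/4$, I then apply \Cref{thm:uat_spinn} on the compact set $\Omega\times B_M$ to make $\|f-f_\theta\|_{L^2(\Omega\times B_M)}<\delta$.

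The moment error over the truncated ball follows for free from the $L^2$ error: since $C_M:=\bigl(\int_{B_M}\phi^2\,d\mathbf{v}\bigr)^{1/2}<\infty$, Cauchy--Schwarz in $\mathbf{v}$ gives $\bigl\|\int_{B_M}(f-f_\theta)\phi\,d\mathbf{v}\bigr\|_{L^2(\Omega)}\le C_M\|f-f_\theta\|_{L^2(\Omega\times B_M)}\le C_M\delta$. Choosing $\delta<\min\{\epsilon/4,\ \epsilon/(4C_M)\}$ keeps both the $L^2$ and the moment errors on $\Omega\times B_M$ below $\epsilon/4$, and the triangle inequality then assembles the two claimed bounds from the $B_M$ and $B_M^c$ contributions --- once the parts of $f_\theta$ living on $\Omega\times B_M^c$ are shown to be small.

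Controlling $f_\theta$ on $\Omega\times B_M^c$ is the heart of the matter and is exactly what the Gaussian factor buys. With a tanh activation every velocity subnetwork has bounded range, so $p_\theta$ is globally bounded and $|f_\theta(\mathbf{x},\mathbf{v})|\le C_\theta e^{-\alpha|\mathbf{v}|^2}$; hence $\|f_\theta\|_{L^2(\Omega\times B_M^c)}$ and $\bigl\|\int_{B_M^c}f_\theta\phi\,d\mathbf{v}\bigr\|_{L^2(\Omega)}$ are dominated by $C_\theta$ times the Gaussian tails $\bigl(\int_{B_M^c}e^{-2\alpha|\mathbf{v}|^2}d\mathbf{v}\bigr)^{1/2}$ and $\int_{B_M^c}e^{-\alpha|\mathbf{v}|^2}\phi\,d\mathbf{v}$, which both decay as $M\to\infty$. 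The subtlety --- and the main obstacle --- is that $C_\theta$ is pinned down only after $p_\theta$ is chosen, so $M$ cannot simply be enlarged afterwards; I would circumvent this by approximating, instead of $f$ itself, the decaying profile left after removing the Gaussian, i.e.\ a continuous $g$ with $e^{-\alpha|\mathbf{v}|^2}g\approx f$ and $g(\mathbf{x},\mathbf{v})\to0$ as $|\mathbf{v}|\to\infty$ (a Gaussian-weighted polynomial, available from completeness of Hermite functions). Because such a $g$ vanishes at infinity, the subnetworks can be taken to saturate to $0$, bounding $C_\theta$ by $\|g\|_\infty$ uniformly in $M$; the Gaussian tails then push the complementary contributions below $\epsilon/4$ with a constant independent of the truncation. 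Securing this uniform-in-$\mathbf{v}$ estimate for the separable network is precisely where the built-in Maxwellian decay does the work that the compact-set statement of \Cref{thm:uat_spinn} cannot.
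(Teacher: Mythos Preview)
Your route diverges from the paper's. The paper's argument is shorter: it fixes a compact $C\subset\mathbb{R}^3$ large enough that both $\|f\|_{L^2(\Omega\times C^c)}$ and $\bigl\|\int_{C^c}f\phi\,d\mathbf{v}\bigr\|_{L^2(\Omega)}$ fall below $\epsilon/2$, invokes \Cref{thm:uat_spinn} on $\Omega\times C$ with target accuracy $\epsilon/\bigl[2(1+|C|^2)(1+m(C)^{1/2})\bigr]$, and then simply \emph{extends $f_\theta$ by zero} to $\Omega\times C^c$. With $f_\theta\equiv0$ off $C$, the complementary contributions reduce to those of $f$ alone, and the moment error on the compact piece is handled by Cauchy--Schwarz exactly as you do. The paper does not address whether such a zero extension is itself a separable PINN; it treats that step as unproblematic.

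You are more scrupulous on precisely this point, and building Maxwellian decay into the ansatz via the Gaussian factor is a sound idea that mirrors the architecture actually used later in the paper. But your resolution of the $C_\theta$--$M$ circularity has its own gap. The assertion that ``the subnetworks can be taken to saturate to $0$, bounding $C_\theta$ by $\|g\|_\infty$'' does not follow from \Cref{thm:uat_spinn}: that result gives only $L^2$ closeness on the compact set and says nothing about $\|p_\theta\|_{L^\infty}$ or about the limiting values of the velocity subnetworks, and a $\tanh$ network saturates to \emph{some} constant at infinity, not one you may prescribe. Even granting saturation to zero, an $L^2$ bound on $\Omega\times B_M$ does not control $\|p_\theta\|_{L^\infty(\Omega\times B_M)}$ by $\|g\|_\infty$. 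To close this you would need either an explicit separable construction for the Hermite profile with uniform bounds---bypassing \Cref{thm:uat_spinn} for the velocity factors---or a sup-norm approximation statement, neither of which you supply. In short, you have correctly located the difficulty the paper elides, but as written you trade its unexamined zero extension for an unproven $L^\infty$ control step.
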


\begin{proof}
    We first note that $f\phi$ exhibits rapid decay along the velocity domain since
    \[
        M:= \int_{\mathbb{R}^3} f \phi d{\bf v} \in L^2(\Omega).
    \]
    Let \(B(r)\) be a ball of the radius \(r\) centered at the origin.
    We define $M_r$ as $\int_{B(r)} f\phi dv$.
    Since $f\phi \ge 0$, we have $0 \le M_r \le M$, and $M_r \rightarrow M$ as $r \rightarrow \infty$.
    Therefore, with
    \[
        \int_\Omega (M-M_r)^2 dx \le \int_\Omega M^2 dx < \infty,
    \]
    the dominated convergence theorem implies $\|M_r - M \|_{L^2(\Omega)} \rightarrow 0$ as $r \rightarrow \infty$.
    Hence, we can choose a compact subset $A \subset \mathbb{R}^3$ such that
    \[
        \left \| \int_{A^c} f \phi d{\bf v} \right\|_{L^2(\Omega)} < \frac{\epsilon}{2}.
    \]
    On the other hand, from $f \in L^2(\Omega \times \mathbb{R}^3)$, we have
    \[
        \int_\Omega\int_{\mathbb{R}^3} f^2 dv dx < \infty.
    \]
    We choose a compact subset $B \subset \mathbb{R}^3$ such that
    \[
        \int_\Omega \int_{B^c} f^2  dv dx < \frac{\epsilon^2}{4}.
    \]
    Let $C = A\cup B$.
    We divide the microscopic velocity space $\mathbb{R}^3$ into two parts, $C$ and $C^c$.
    For $C$, by \cref{thm:uat_spinn}, there exists a separable neural network $f_\theta$ such that 
    \[\| f - f_\theta \|_{L^2(\Omega \times C)} < \frac{\epsilon}{2(1+|C|^2)\left(1 + m(C)^{1/2}\right)} \le \frac{\epsilon}{2},\]
    where \(|C|:= \sup\{|v|: v \in C\}\) and \(m(C) = \int_C dx\).
    For $C^c$, we extend $f_\theta$ by zero to $\Omega \times \mathbb{R}^3$.
    Then we have
    \[
    \begin{split}
    \| f - f_\theta\|_{L^2(\Omega \times \mathbb{R}^3)}
    & \le \| f - f_\theta\|_{L^2(\Omega \times C)} + \| f - f_\theta \|_{L^2(\Omega \times C^c)} \\
    & = \| f - f_\theta\|_{L^2(\Omega \times C)} + \| f \|_{L^2(\Omega \times C^c)} \\
    & \le \frac{\epsilon}{2(1 + |C|^2)} + \frac{\epsilon}{2} \le \epsilon.
    \end{split}
    \]
    Moreover,
    \[
    \begin{split}
        \left\| \int_{\mathbb{R}^3} (f - f_\theta) \phi d{\bf v} \right\|_{L^2(\Omega)}
        & \le \left \| \int_C (f-f_\theta)\phi d{\bf v}\right \|_{L^2(\Omega)}
        + \left \| \int_{C^c} (f-f_\theta)\phi d{\bf v}\right \|_{L^2(\Omega)} \\
        & = \left \| \int_C (f-f_\theta)\phi d{\bf v}\right \|_{L^2(\Omega)}
        + \left \| \int_{C^c} f\phi d{\bf v}\right \|_{L^2(\Omega)} \\
        & \le \left \| \int_C (f-f_\theta)\phi d{\bf v}\right \|_{L^2(\Omega)} + \frac{\epsilon}{2} \\
        & \le  \left[\int_\Omega \left(\int_C (f-f_\theta)^2d{\bf v}\int_C \phi^2 d{\bf v}\right) dx\right]^{1/2} + \frac{\epsilon}{2} \\ 
        & \le \| f-f_\theta\|_{L^2(\Omega\times C)}(1+|C|^2)^{1/2}m(C)^{1/2} + \frac{\epsilon}{2}
        \le \epsilon.
    \end{split}
    \]
\end{proof}
\begin{remark}
Note that our convergence proof is carried out in $L^2$ space whereas the kinetic distribution functions lie in $L^1$ space. The main reason is that the underlying Theorem \ref{thm:uat_spinn} is established in $L^2$ space using the orthogonality in $L^2$. 
We, however, mention that such $L^2$ based convergence analysis provides a relevant perspective for the convergence of our simulation since we are working on a truncated domain so that we have by H{\"o}lder's inequality
\[
    \|f-f_{\theta}\|_{L^1} \leq C_\gamma \|f-f_\theta\|_{L^2},
\]
where $\gamma$ is the truncation parameter.
\end{remark}

\subsubsection{Fast Evaluation of the Macroscopic Moments}
Consider $N_v$ as the number of points on each axis of the microscopic velocity space, utilized for evaluating the macroscopic moments as outlined in \eqref{eq: moments}.
As previously discussed, separable PINNs necessitate $O(3N_v)$ network forward passes for the computation of $N_v^3$ function values. 
However, directly evaluating the integrals from the output of the separable PINN is not computationally efficient, as it requires $O(N_v^3)$ operations and memory cost.
To optimize this process, we leverage the linear nature of moment calculation in \eqref{eq: moments} and the inherent separable structure of the separable PINNs. 
Afterwards, the macroscopic moments can be evaluated more efficiently, within $O(3 R N_v)$ operations.

The integrals for the macroscopic moments \(m_{ijk}\) can be computed as follows:
\begin{equation}\label{eq:separable_integral}
    \begin{split}
            \int f_\theta(t,{\bf x,v}) v_x^{i_x} v_y^{i_y} v_z^{i_z} d{\bf v}
            &= \int \sum_{r=1}^R f_r(t;\theta_t) \prod_{p \in \{x,y,z\}} f_r(p;\theta_p) f_r(v_p; \theta_{v_p}) v_p^{i_p} d{\bf v} \\
            &= \sum_{r=1}^R f_r(t; \theta_t) \prod_{p \in \{x,y,z\}}  f_r(p; \theta_p) \int  f_r(v_p; \theta_{v_p})v_p^{i_p} dv_p.
    \end{split}
\end{equation}
Consequently, \Cref{eq:separable_integral} significantly reduces both the computational cost and memory requirements to $O(3RN_v)$, a substantial improvement over the $O(N_v^3)$ required for standard computation.
{\Cref{tab:integral_speed} illustrates elapsed times for calculating $(\rho, {\bf u}, T)$ for SPINN with two distinct integration strategies as a function of the rank, $R$, and the number of quadrature points, $N$, per velocity axis.
We considered only one spatial dimension ($d=1$),
and discretized the space-time domain by $(N_t, N_x) = (12,12)$, respectively.
Here, SPINN has five FNNs which have a width of $128$, and a depth of $3$ for each axis.
As a baseline, single FNN-based methods such as vanilla PINN, resulted in out-of-memory when $N=65$ even though we employed only one hidden layer.
For $N=33$, it took $6.4m \pm 621n$ seconds, which is less efficient than SPINN.
For both the ``Standard'' and ``Separate'' integration strategies for SPINN, there appears to be no clear relationship between $R$ and computing times.
This lack of correlation can be attributed to the following factors: In the Standard strategy, a significant portion of the computing time is consumed by the integration phase, with its duration depending solely on $N$.
Conversely, in the Separate integration strategy, the computational cost of the network's forward pass dominates over integration, once again dependent solely on $N$.
However, the computing times for Standard integration increase approximately $6 \sim 8$ times as $N$ doubles.
For $N=257$, we observed the out-of-memory issue for standard integration immediately after increasing $R=1$ to $R=2$.
In contrast, Separate integration shows excellent scalability, as the computing times remain similar across all configurations.
Although SPINN achieves notable efficiency gains through its network architectural design compared to the basic form like FNNs, the memory constraints and limited scalability of standard integration become significantly more pronounced as the spatial dimension increases.
Therefore, this efficiency of the separate integration strategy enables the fast evaluation of integrals on fine quadrature points, assuming that the rank $R$ is not very large.
}
In this research, we have chosen to use $N_v = 257$ and $R \in \{128, 256\}$ for our computations.

\begin{table}
    \centering
    \begin{tabular}{|c|c|c|c|c|}
    \hline
        (R, N) & (32,65) & (32,129) & (64,65) & (64,129) \\ \hline
        Standard & $892\mu \pm 3.57\mu$ & $5.57m \pm 3.53\mu$ & $899\mu \pm 882n$ & $5.78m \pm 7.27\mu$ \\
        Separate & $167\mu \pm 1.78\mu$ & $167\mu \pm 336n$ & $169\mu \pm 1.66\mu$ & $169\mu \pm 107n$ \\
    \hline \hline
        (R, N) & (128,65) & (128,129) & (1,257) & (1024,1025)\\ \hline
        Standard & $988\mu \pm 935n$ & $6.02m \pm 4.42\mu$ & $32.2m \pm 33.7\mu$ & OOM\\
        Separate & $166\mu \pm 1.39\mu$ & $168\mu \pm 394n$ & $150\mu \pm 6.69\mu$ & $213\mu \pm 3.4\mu$\\
    \hline
    \end{tabular}
    \caption{
    Computation times (mean $\pm$ std) for calculating $(\rho, {\bf u}, T)$ for SPINN.
    ``Separate'' corresponds to the integration strategy outlined in \Cref{eq:separable_integral}.
    ``Standard'' corresponds to the strategy without \Cref{eq:separable_integral}.
    Here, $(n, \mu, m) = (10^{-9}, 10^{-6}, 10^{-3})$ seconds, respectively.
    $R$ is the rank of the SPINN.
    $N$ is the number of quadrature points per velocity axis.
    We considered $d=1$ case, with $(N_t, N_x) = (12, 12)$.
    OOM means ``Out Of Memory''.
    We only considered $R=1$ for $N=257$, as $R=2$ produces OOM for Standard integration.
    All measurements were conducted on single precision, via \texttt{\%timeit} command of iPython.
    }
    \label{tab:integral_speed}
\end{table}

\subsection{Strategies for Enhancing Accuracy}
Neural networks are intricately designed and trained to precisely approximate macroscopic moments $\rho, {\bf u}, T$. However, there are scenarios where, despite the particle density $f$ being accurately approximated to a certain level of mean squared error, the macroscopic moments may still be inaccurately represented.
This issue often stems from the accumulation of numerical errors during the integration process, as the macroscopic moments are derived by integrating the particle density.
To mitigate this challenge, we introduce a neural network architecture tailored for kinetic equations, complemented by a relative loss function, aimed at enhancing the accuracy of these macroscopic moments.

\subsubsection{Maxwellian Splitting}
The BGK operator characterizes the evolution of an initial particle density function towards a state of thermodynamic equilibrium.
Consequently, it is natural to conceptualize the particle density function as a composite of two components: the local equilibrium part and the residual part.
This decomposition approach aligns with the methodologies proposed in the works of Lou et al. \cite{lou2021physics} and Li et al. \cite{li2023solving}, where the motivation might be brought from \cite{bennoune2008uniformly,jin2010micro}.
Inspired by the decomposition approach, we propose the employment of two distinct separable PINNs: $f_\theta^\mathrm{eq}$ and $f_\theta^\mathrm{neq}$.
The first network, $f_\theta^\mathrm{eq}$, is designed to accept inputs $t, {\bf x}$ and outputs a tuple $(\rho_\theta, {\bf u}_\theta, T_\theta) \in \mathbb{R}^{1+3+1}$. 
The second network, $f_\theta^\mathrm{neq}$, processes the full set of variables $t, {\bf x, v}$, yielding a real-valued output. These networks are combined as follows:
\begin{equation}\label{eq:maxwellian_splitting}
    f_\theta(t,{\bf x, v}) = M[\rho_\theta(t, {\bf x}), {\bf u}_\theta(t, {\bf x}), T_\theta(t, {\bf x})]({\bf v}) + \alpha f_\theta^\mathrm{neq}(t, {\bf x, v})
\end{equation}
to approximate the solution to the BGK model.
Here, $M[\rho, {\bf u}, T]$ denotes the Maxwellian distribution parameterized by the macroscopic moments $\rho, {\bf u}, T$, albeit used somewhat loosely in this notation.
The coefficient $\alpha$ is a parameter dependent on the specific problem.
While one might consider setting $\alpha$ equal to the Knudsen number (Kn) following the Chapman-Enskog expansion \cite{chapman1990mathematical}, in this study, we have fixed $\alpha$ at \(1\) since it performed well throughout all the experiments.

To compute the macroscopic moments as specified in \eqref{eq: moments}, we perform separate integrations for the equilibrium term $M[\rho_\theta, {\bf u}_\theta, T_\theta]$ and the non-equilibrium term $f_\theta^\mathrm{neq}$, taking advantage of the linear nature of the moment calculation.
Importantly, for the equilibrium term, the zeroth, first, and second moments are directly given by $\rho_\theta$, $\rho_\theta {\bf u}_\theta$, and $\rho_\theta (3 T_\theta + |{\bf u}_\theta|^2)$, respectively, eliminating the need for numerical integration.
The non-equilibrium term, in contrast, is efficiently integrated using the method described in \eqref{eq:separable_integral}.
Although this decomposition approach does not have formal theoretical justification, empirical evidence indicates that splitting the density function into these two terms significantly improves both training speed and accuracy.

\subsubsection{Integration of Decay Property into Neural Networks}\label{sec:multiplying_gaussian_functions}
Accurate computation of macroscopic moments necessitates a more careful consideration of high relative velocities. Given that the density function typically exhibits rapid decay as the magnitude of ${\bf v}$ increases, contributions from high relative velocities to the macroscopic moments are generally negligible.
However, if the neural network employed does not mirror this rapid decay, its output at high relative velocities might contribute excessively, leading to inaccuracies in the calculated macroscopic moments.
Therefore, ensuring that the neural network appropriately reflects the decay characteristics of the particle density function is crucial for the precision of the macroscopic moment calculations.

For an illustration, we consider the standard Gaussian function $g$ on the truncated domain $\mathcal{D} := (-10, 10) \subset \mathbb{R}$:
\[
    g: v \in \mathcal{D} \mapsto \frac{1}{\sqrt{2\pi}}e^{-\frac{v^2}{2}}.
\]
When performing numerical integration of its moments, the function values near the boundary of the domain $\partial \mathcal{D}$ (specifically around $\pm 10$) are so small that the contribution of $v$ near \(\partial \mathcal{D}\) to the moments should be negligible.
However, during the training phase, a neural network $g_\theta$ that approximates $g$ may not exhibit the same rapid decay as $g$.
To investigate this issue, we employ an FNN, denoted as $g_\theta$, to approximate the Gaussian function $g$.
This approximation is achieved by minimizing a loss function $\mathcal{L}_\mathrm{MSE}$, which is defined as follows:
\begin{equation}\label{eq:l2}
    \mathcal{L}_\mathrm{MSE}: \theta \mapsto \frac{1}{20}\int_{-10}^{10} |g_\theta(v) - g(v)|^2 dv.
\end{equation}
The loss function was evaluated with the Monte-Carlo integration.
Additional details can be found in \Cref{tab:toy_experiments}.

\begin{table}
\centering
\begin{tabular}{ |c|c|c|c| } 
 \hline
    layers & (1, 256, 1) \\
    activation function & $\tanh$ \\
    training points & 16, uniform distribution \\
    optimizer & Adam \\
    the number of iterations & 1M \\
    initial learning rate & $10^{-4}$ \\
    learning rate schedule & cosine decay to $0$ \cite{loshchilov2016sgdr} \\
    initial $(\mu, \tau)$ & $(2, 1/\sqrt{2})$ \\
 \hline
\end{tabular}
\caption{
    Additional details for toy experiments in \Cref{sec:multiplying_gaussian_functions,sec:relative_loss}.
}
\label{tab:toy_experiments}
\end{table}

\begin{figure}[htbp]
	\centering
    \includegraphics[width=0.9\textwidth]{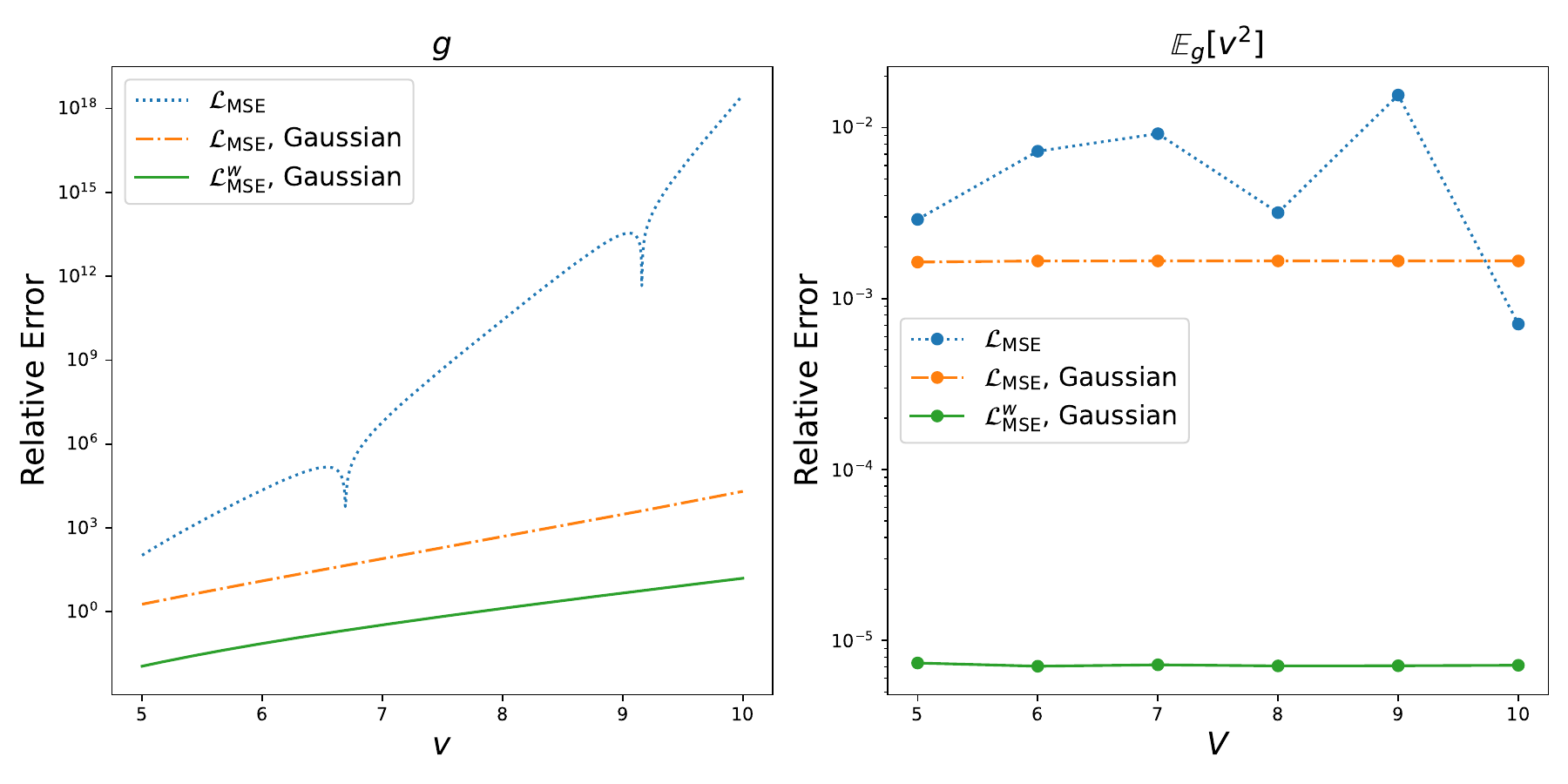}
    \caption{
    Left: \(v \in (5, 10) \mapsto |g_\theta(v) - g(v) / |g(v)|\).
    Right: $V\mapsto \int_{-V}^V v^2 |g_\theta(v) - g(v)|dv / \int_{-V}^V v^2g(v)dv$.
    Uniform meshes of $1024$ points are used to evaluate pointwise errors and integrals.
    Dotted line: results for the \(\mathcal{L}_\mathrm{MSE}\) loss \eqref{eq:l2} approximation with a neural network (baseline).
    Dot dashed and solid lines: results for \(\mathcal{L}_\mathrm{MSE}\) loss and relative loss \(\mathcal{L}_\mathrm{MSE}^w\) \eqref{eq:relative_loss}, respectively, with a neural network multiplied by $g_p$ \eqref{eq:gaussian}.
    }
	\label{fig:gaussian_regression}
\end{figure}

\Cref{fig:gaussian_regression} shows the outcomes of three distinct numerical experiments designed to assess various aspects of our methodology. These experiments include an investigation into the effects of the approximated tail values of $g_\theta$ on the second moment, an examination of the impact of incorporating a Gaussian function into the neural network (referred to as Gaussian in the figure legends), which is aimed at facilitating the decay speed, and an effect of employing a relative loss function \(\mathcal{L}_\mathrm{MSE}^w\), denoted as (to be defined in \eqref{eq:relative_loss} below), that imposes large weights on small-scale features.
The left panel of \Cref{fig:gaussian_regression} presents relative point-wise error $|g_\theta(v) - g(v)| / |g(v)|$ on a part of the domain $(5, 10)$.
The right panel presents errors for the second moment evaluated on the truncated interval $(-V, V)$.
The dotted lines correspond to the results of minimizing \(\mathcal{L}_\mathrm{MSE}\) function, without any modifications in network architecture.
As observed, the relative pointwise error exhibits an exponential increase with the growth of $v$. This pattern suggests that the optimized neural network, $g_\theta$, does not decay as rapidly as $g$. A contributing factor to this behavior is the nature of the \(\mathcal{L}_\mathrm{MSE}\) loss function, which tends to relatively overlook the smaller-scale values.
Consequently, the errors observed in the second moment are not uniform, implying that $g_\theta$ possesses thicker tails in comparison to $g$. Furthermore, despite being a regression setting, the magnitude of the errors is notably significant, particularly above $10^{-2}$ when $V = 9$. This level of error could lead to substantial inaccuracies in training PINNs, given that direct access to the exact solution of the PDE is typically unavailable.

The equilibrium term $M[\rho_\theta, {\bf u}_\theta, T_\theta]$ in \eqref{eq:maxwellian_splitting} decays as fast as $g$,
yet the non-equilibrium term $f_\theta^\mathrm{neq}$ does not.
To make $f_\theta^\mathrm{neq}$ decay exponentially fast to $|{\bf v}|$, we modified its form slightly into 
\[
    f_\theta^\mathrm{neq}(t,{\bf x, v})
    = \sum_{r=1}^R  f_r(t;\theta_t) \prod_{p \in \{x,y,z\}} f_r(p;\theta_p) \left[ f_r(v_p;\theta_p) g_p(v_p; \tau, \mu_p) \right],
\]
where
\begin{equation}\label{eq:gaussian}
    g_p: v_p \mapsto e^{- \tau^2 (v_p - \mu_p)^2 / 2}.
\end{equation}
We set $\tau, \mu:= (\mu_x, \mu_y, \mu_z)$ as network parameters to avoid hand-tuning procedures.
We approximated the function $g$ by a neural network multiplied by $g_p$.
Initial values for parameters $\tau$, and $\mu$ were set to $1/\sqrt{2}$, and $2$, respectively.
Note that the standard Gaussian function corresponds to $\tau = 1$ and $\mu = 0$.
The dot-dashed lines of \Cref{fig:gaussian_regression} present the result.
From the left panel, we can observe that values for large $|v|$ are well captured, compared to the dotted line (baseline).
As we can see from the right panel, even though the approximated second moment was better than the baseline overall, they were in the same order of magnitude.

\subsubsection{Relative Loss}\label{sec:relative_loss}
As previously discussed, the standard $L^2$ loss function \(\mathcal{L}_\mathrm{MSE}\) tends to prioritize learning large-scale features.
However, in our context, small-scale features are equally crucial, particularly since our focus is on accurately approximating the macroscopic moments \(\rho, \mathbf{u}, T\).
Therefore, to appropriately emphasize different scales, we propose the adoption of a weighted \(L^2\) loss function, defined as:
\begin{equation}\label{eq:relative_loss}
    \mathcal{L}_\mathrm{MSE}^w: \theta \mapsto \int (w \mathcal{N}[f_\theta] )^2 dtd{\bf x}d{\bf v},
\end{equation}
where $\mathcal{N}$ represents either the PDE residual operator, the boundary condition, or the initial condition.
The weighting function $w$ is formulated as $1 / (|f_\theta| + \epsilon)$, stopping gradient computation in $\theta$.
The term $\epsilon$ is included to ensure numerical stability, with a practical value of $10^{-3}$ typically employed. For the initial condition loss, $w[f_\theta]$ can be substituted with $w[f_0]$.
It is important to note that $\mathcal{L}_\mathrm{MSE}^w$ assigns larger weights when the magnitude of $f_\theta$ is small, and smaller weights when it is large, thus enabling more balanced learning of different scale features.

We conducted a demonstrative example, as illustrated in \Cref{fig:gaussian_regression}, where we approximated the function $g$ using a neural network in conjunction with \eqref{eq:gaussian}, employing the loss function defined in \eqref{eq:relative_loss}.
The outcomes of this approximation are represented by the solid lines in \Cref{fig:gaussian_regression}.
The results clearly indicate a significant improvement over the dot-dashed line, which represents the approximation using the standard $L^2$ loss function \(\mathcal{L}_\mathrm{MSE}\) with uniform weights, as per \eqref{eq:l2}.
Notably, the relative error for the second moment was reduced by two orders of magnitude when using our proposed method.

\section{Numerical Results}\label{sec:numerical_results}
In this section, we present a series of numerical experiments to demonstrate the efficacy of SPINN-BGK.
The simulations cover a range of problems: two in 1D, two in 2D, and a particularly challenging one in 3D.
For the 1D and 2D problems, we generated reference solutions employing a high-order conservative semi-Lagrangian scheme \cite{cho2021conservative2} on a personal computer equipped with an AMD Ryzen 9 5900X 12-Core Processor operating at 3.70 GHz and 64GB of RAM, using Matlab \cite{matlab}.
However, due to the high computational costs associated with the curse of dimensionality, generating reference solutions for 3D problems was not feasible, as suitable numerical schemes are not readily available and there is a lack of open-source implementations for such complex cases.
This is precisely where our research makes a substantial contribution.
The SPINN-BGK successfully computes a numerical solution for the 3D case efficiently, even on a single GPU, showcasing its potential to address the computational barriers typically encountered in high-dimensional simulations.

Throughout these experiments, we consider three different Knudsen numbers, $\mathrm{Kn} \in \{10^0, 10^{-1}, 10^{-2}\}$.
The activation function for our MLP is set as $x \mapsto \sin(w_0 x)$, with network parameters initialized following the guidelines in \cite{sitzmann2020implicit}.
Empirically, we chose $w_0 = 10$ for our experiments.
For the training of SPINN-BGK, we utilized the Lion optimizer \cite{chen2023symbolic}, which bases its updates on the sign of the gradient momentum rather than the gradient momentum itself.
We observed that the Lion optimizer tends to achieve a lower training loss compared to the Adam optimizer \cite{kingma2014adam}.

To evaluate the accuracy of SPINN-BGK solutions, we employed the relative $L^2$ error metric, defined as $\| \phi_\mathrm{true} - \phi_\mathrm{pred} \|_2 / \|\phi_\mathrm{true}\|_2$ for $\rho$ and $T$, and $\| \phi_\mathrm{true} - \phi_\mathrm{pred} \|_2 / (\|\phi_\mathrm{true}\|_2 + 1)$ for $\bf u$, to avoid division by zero.
All experiments were conducted on a single NVIDIA RTX 4090 GPU, implemented using JAX \cite{jax2018github}.
Further experimental details are provided in \eqref{tab: detail}.
A comparison of the computing time between our SPINN-BGK approach and the conservative semi-Lagrangian scheme, as described in \cite{cho2021conservative2}, is provided in \ref{app:a}.

\begin{table}
\centering
\begin{tabular}{ |c|c|c|c| } 
 \hline
    width & 128 \\
    $R$ & 128 (smooth), 256 (Riemann) \\
    depth & 3 \\
    optimizer & Lion \\
    the number of iterations & 100K \\
    initial learning rate & $10^{-5}$ \\
    learning rate schedule & cosine decay to $0$ \cite{loshchilov2016sgdr} \\
    $(\lambda_r, \lambda_\mathrm{ic}, \lambda_\mathrm{bc})$ & $(1, 10^3, 1)$ \\
    initial $(\mu, \tau)$ & $({\bf 0}, 1)$ \\
 \hline
\end{tabular}
\caption{
    Additional details for \Cref{sec:numerical_results}.
    We conducted all experiments in \Cref{sec:numerical_results} with this setting for consistency.
}
\label{tab: detail}
\end{table}

\subsection{1D Smooth Problem}\label{sec:1d_smooth}
In this section, we conduct a test on the one-dimensional smooth problem as outlined in section 6.1 of \cite{li2023solving}.
The temporal scope of our test is confined to the interval $(0, 0.1]$.
We consider a spatial domain of $(-0.5, 0.5)$, imposing periodic boundary conditions.
For the microscopic velocity space, the computational domain is set to $(-10, 10)^3$.
The initial condition for this test is defined by a Maxwellian distribution characterized by specific macroscopic moments
\[
    \rho_0(x) = 1 + 0.5\sin(2\pi x), \quad
    {\bf u}_0(x) = {\bf 0}, \quad 
    T_0(x) = 1 + 0.5\sin(2\pi x + 0.2).
\]
In this experiment, the trapezoidal rule with $257$ points was employed for the numerical evaluation of macroscopic moments.
The optimization process involved $100K$ gradient descent steps. For each descent step, collocation points were sampled in the size of $(N_t, N_x, N_v^3) = (12, 16, 12^3)$.

\begin{figure}[htbp]
    \centering
    \includegraphics[width=\textwidth]{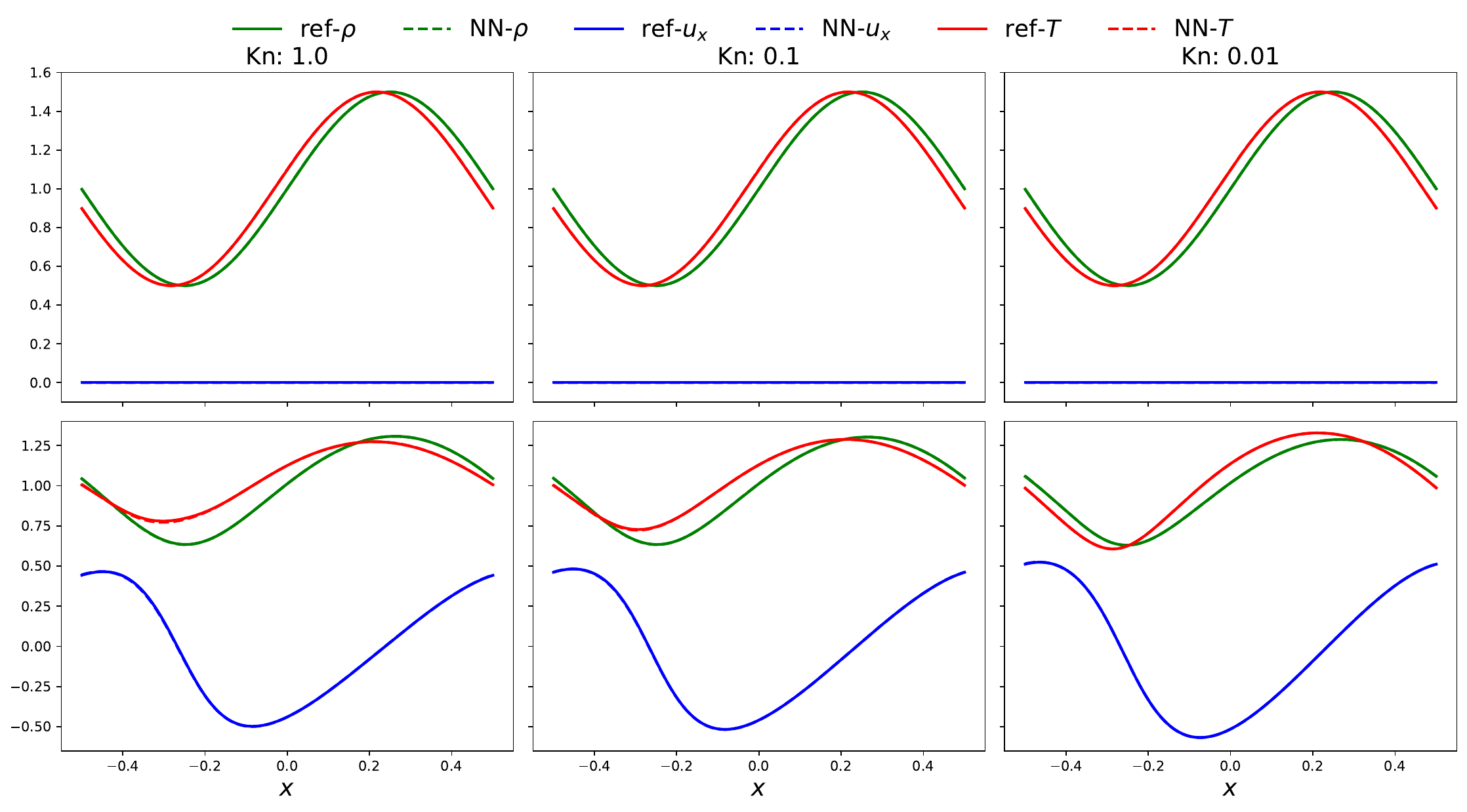}
    \caption{
        Numerical solutions to \cref{sec:1d_smooth}.
        Macroscopic moments $\rho, u_x, T$ are presented.
        The top row corresponds to $t=0$.
        The bottom row corresponds to $t=0.1$.
        Left column: $\mathrm{Kn}$ = 1.0.
        Middle column: $\mathrm{Kn}$ = 0.1.
        Right column: $\mathrm{Kn}$ = 0.01.
        The solid lines are the reference solutions, and the dotted lines are the neural network predictions.
    }
    \label{fig:1d_smooth}
\end{figure}

The numerical results are depicted in \Cref{fig:1d_smooth}.
The entire computation was completed in approximately 4 minutes.
For comparison, the reference solution was generated using a finer grid with $N_x = 1280$ and $N_v^3 = 25^3$.
It is important to note that $u_y$ and $u_z$ are omitted in our analysis, as they do not exhibit significant dynamics along the $y$ and $z$ directions.
The plot demonstrates that the predicted macroscopic moments align closely with the reference solutions for a range of Knudsen numbers, specifically $\mathrm{Kn} \in \{10^0, 10^{-1}, 10^{-2}\}$.
In \Cref{tab:1d_smooth}, we present the relative $L^2$ errors between the neural network predictions and the reference solutions.
Notably, all errors are equal to or less than the order of $O(10^{-3})$, which signifies a strong agreement between SPINN-BGK predictions and the reference solutions.
This result is indicative not only of a qualitative match but also of a quantitative alignment, underscoring the effectiveness of SPINN-BGK in accurately capturing the dynamics of the system.

\begin{table}
\centering
\begin{tabular}{ |c|c|c|c| } 
 \hline
  $(t=0,t=0.1)$ & $\mathrm{Kn} = 1$ & $\mathrm{Kn} = 0.1$ & $\mathrm{Kn} = 0.01$ \\
  \hline
  $\rho$ & (2.74e-4, 4.65e-4) & (1.81e-4, 3.99e-4) & (1.92e-4, 3.47e-4) \\
  $u_x$ & (4.19e-5, 1.36e-3) & (2.63e-5, 8.44e-4) & (3.66e-5, 3.53e-4) \\
  $T$ & (2.16e-4, 2.89e-3) & (1.49e-4, 1.58e-3) & (1.95e-4, 2.07e-4) \\
 \hline
\end{tabular}
\caption{
    Relative $L^2$ errors for the macroscopic moments $\rho, u_x, T$ of the 1D smooth problem.
    For each parenthesis, the first component corresponds to $t=0$ and the second component corresponds to $t=0.1$.
    }
\label{tab:1d_smooth}
\end{table}

\subsection{1D Riemann Problem}\label{sec:1d_riemann}
In this section, we conduct a test on the one-dimensional Sod tube problem, as detailed in section 6.2 of \cite{li2023solving}.
The computational settings for this test largely mirror those used in \Cref{sec:1d_smooth}, with the exceptions being the boundary condition, the number of collocation points, and the initial condition.
For the spatial domain, we implement the free-flow boundary condition.
The initial condition is represented by a local Maxwellian distribution with macroscopic moments $(\rho_0, {\bf u}_0, T_0)$.
Specifically, the values on the interval $(-0.5, 0)$ are set to $(1, {\bf 0}, 1)$, and on $[0, 0.5)$, they are $(0.125, {\bf 0}, 0.8)$.
Given the challenge of approximating jump functions with neural networks, we opt for a smoothed version of the macroscopic moments:
\[
    \rho_0(x) = 1 - 0.875 H(x), \quad
    {\bf u}_0(x) = {\bf 0}, \quad
    T_0(x) = 1 - 0.2 H(x).
\]
Here, $H: x \mapsto \left(1 + \tanh(100x)\right)/2$ serves as a smoothed approximation of the Heaviside function.
In each iteration of the process, we randomly sample points in the configuration $(N_t, N_x, N_{v_x}, N_{v_y}, N_{v_z}) = (12, 32, 32, 12, 12)$.

\begin{figure}[htbp]
    \centering
    \includegraphics[width=\textwidth]{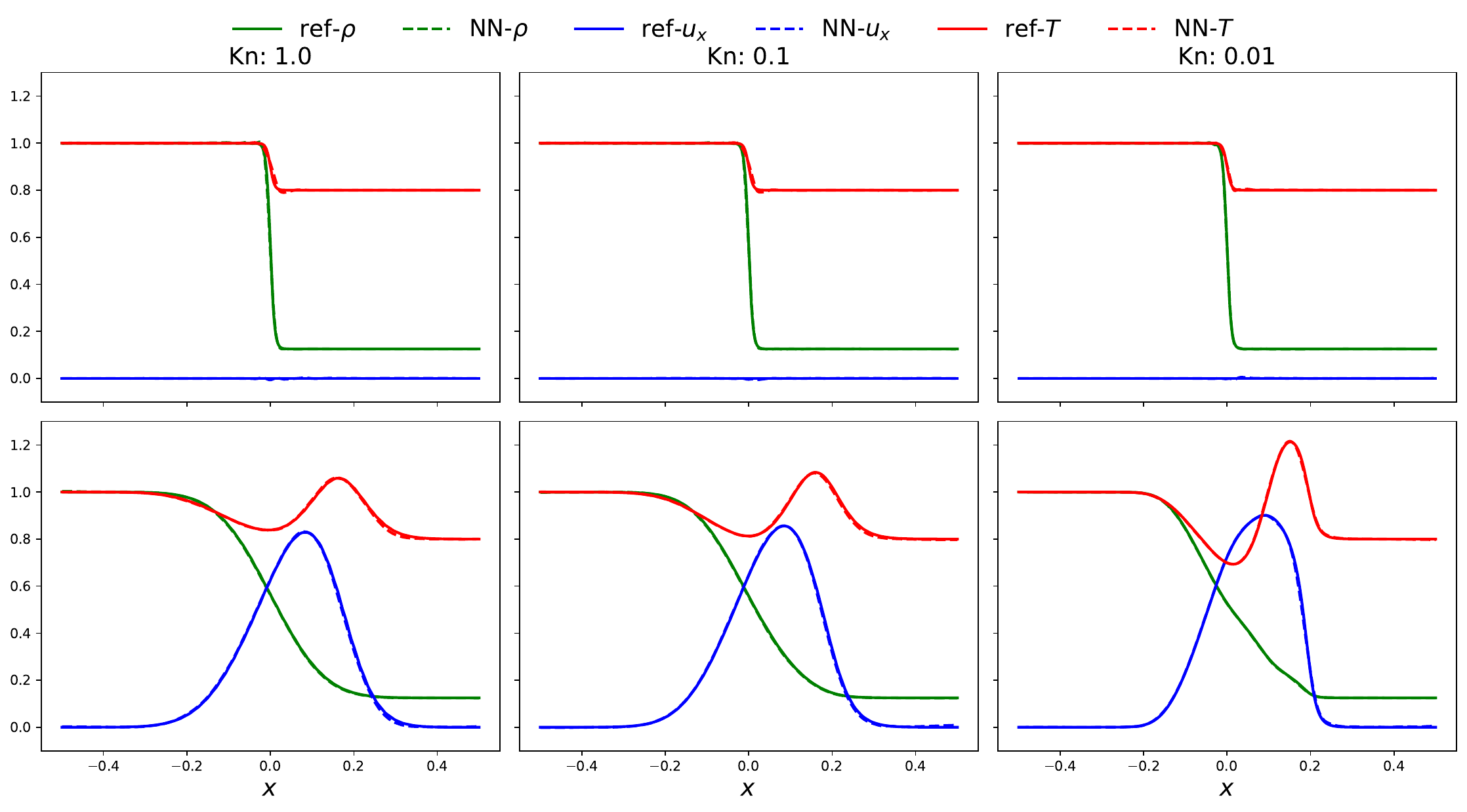}
    \caption{
        Predicted macroscopic moments $\rho, u_x, T$ of the 1D Riemann problem.
        Top row: predicted initial data.
        Bottom row: predicted macroscopic moments at $t=0.1$.
        Left column: $\mathrm{Kn} = 1.0$.
        Middle column: $\mathrm{Kn} = 0.1$.
        Right column: $\mathrm{Kn} = 0.01$.
    }
    \label{fig:1d_riemann}
\end{figure}

In \Cref{fig:1d_riemann}, we present the macroscopic moments obtained through SPINN-BGK alongside the reference solution.
The entire computation was completed in approximately 4 minutes and 10 seconds.
The reference solution was generated using a grid with $N_x = 1280$ and $(N_{v_x}, N_{v_y}, N_{v_z}) = (97, 25, 25)$. The figure demonstrates that the neural network solutions closely align with the reference solutions.
Relative $L^2$ errors for this experiment are detailed in \Cref{tab:1d_riemann}.
Notably, all errors are equal to or less than the order of $O(10^{-3})$, which quantitatively demonstrates a strong alignment between SPINN-BGK solutions and reference solutions.
This level of accuracy underscores the effectiveness of our method in capturing the essential dynamics of the problem.

\begin{table}
\centering
\begin{tabular}{ |c|c|c|c| } 
 \hline
  $(t=0, t=0.1)$ & $\mathrm{Kn} = 1$ & $\mathrm{Kn} = 0.1$ & $\mathrm{Kn} = 0.01$ \\
  \hline
  $\rho$ & (6.72e-3, 2.96e-3) & (5.45e-3, 3.08e-3) & (2.76e-3, 1.77e-3) \\
  $u_x$ &(1.00e-3, 4.65e-3) & (9.50e-4, 4.10e-3) & (8.47e-4, 3.67e-3) \\
  $T$ & (4.58e-3, 4.92e-3) & (3.83e-3, 4.58e-3) & (1.96e-3, 3.02e-3) \\
 \hline
\end{tabular}
\caption{Relative $L^2$ errors for the macroscopic moments $\rho, u_x, T$ of the 1D Riemann problem.}
\label{tab:1d_riemann}
\end{table}

\subsection{2D Smooth Problem}\label{sec:2d_smooth}
In this section, we conduct a test on the two-dimensional smooth problem as outlined in Section 6.3 of \cite{li2023solving}.
The experimental setup largely follows that of \Cref{sec:1d_riemann}, with a few notable adjustments.
The spatial domain is set as $(-0.5, 0.5)^2$, over which we impose periodic boundary conditions.
The initial condition is defined by a Maxwellian distribution with specific macroscopic moments 
\[
    \rho_0(x,y) = 1 + 0.5 \sin(2\pi x) \sin(2 \pi y), \quad
    {\bf u}_0(x,y) = {\bf 0}, \quad
    T_0(x,y) = 1.
\]
For this test, we utilize \(12\) collocation points per axis.
The relative $L^2$ errors obtained from our experiment are detailed in \Cref{tab:2d_smooth}.
Remarkably, the entire computation, including both training and inference phases, is completed in approximately 5 minutes and 10 seconds.
The reference solution for comparison was generated using a grid with $N_x^2 = 160^2$ and $N_v^3 = 25^3$.
Our results demonstrate that we can achieve solutions that are quantitatively well-aligned with the reference solutions in the two-dimensional case.

\begin{table}
\centering
\begin{tabular}{ |c|c|c|c| } 
 \hline
  $(t=0, t=0.1)$ & $\mathrm{Kn} = 1$ & $\mathrm{Kn} = 0.1$ & $\mathrm{Kn} = 0.01$ \\
  \hline
  $\rho$ & (1.82e-4, 1.29e-4) & (1.71e-4, 1.16e-4) & (1.87e-4, 1.53e-4) \\
  $u_x$ & (1.21e-5, 1.17e-4) & (1.62e-5, 1.08e-4) & (1.40e-5, 1.12e-4) \\
  $u_y$ & (1.37e-5, 1.07e-4) & (9.46e-6, 8.75e-5) & (1.06e-5, 1.23e-4) \\
  $T$ & (1.50e-5, 4.14e-5) & (1.85e-5, 3.50e-5) & (1.61e-5, 8.51e-5) \\
 \hline
\end{tabular}
\caption{Relative $L^2$ errors for the macroscopic moments $\rho, u_x, u_y, T$ of the 2D smooth problem.}
\label{tab:2d_smooth}
\end{table}

\subsection{2D Riemann Problem} \label{sec:2d_riemann}
In this section, we conduct a test on the two-dimensional Riemann problem as presented in \cite{cho2021conservative2}.
The spatial domain for this test is defined as $(-1, 1)^2$, where we apply a free-flow boundary condition.
Within this domain, we consider a circle $S$ defined by $x^2 + y^2 = 0.2$.
The initial condition is a local Maxwellian distribution with macroscopic moments $(\rho_0, {\bf u}_0, T_0)$.
Inside the sphere, the values are set to $(1, {\bf 0}, 1)$, while outside the sphere, they are $(0.125, {\bf 0}, 0.8)$.
To address the discontinuity in the initial data, similar to the approach in the 1D Riemann problem, we employ a relaxed version using the Heaviside function $H(r^2) =  \left(1+\tanh(100r^2)\right)/2$, where $r^2 = 0.2 - x^2 - y^2$.
The computational domain for the microscopic velocity space is established as $(-6, 6)^3$. For each iteration of the process, collocation points are sampled in the configuration $(N_t, N_x^2, N_v^3) = (12, 12^2, 12^3)$.

\begin{figure}
     \centering
     \begin{subfigure}[b]{0.69\textwidth}
         \centering
         \includegraphics[width=\textwidth]{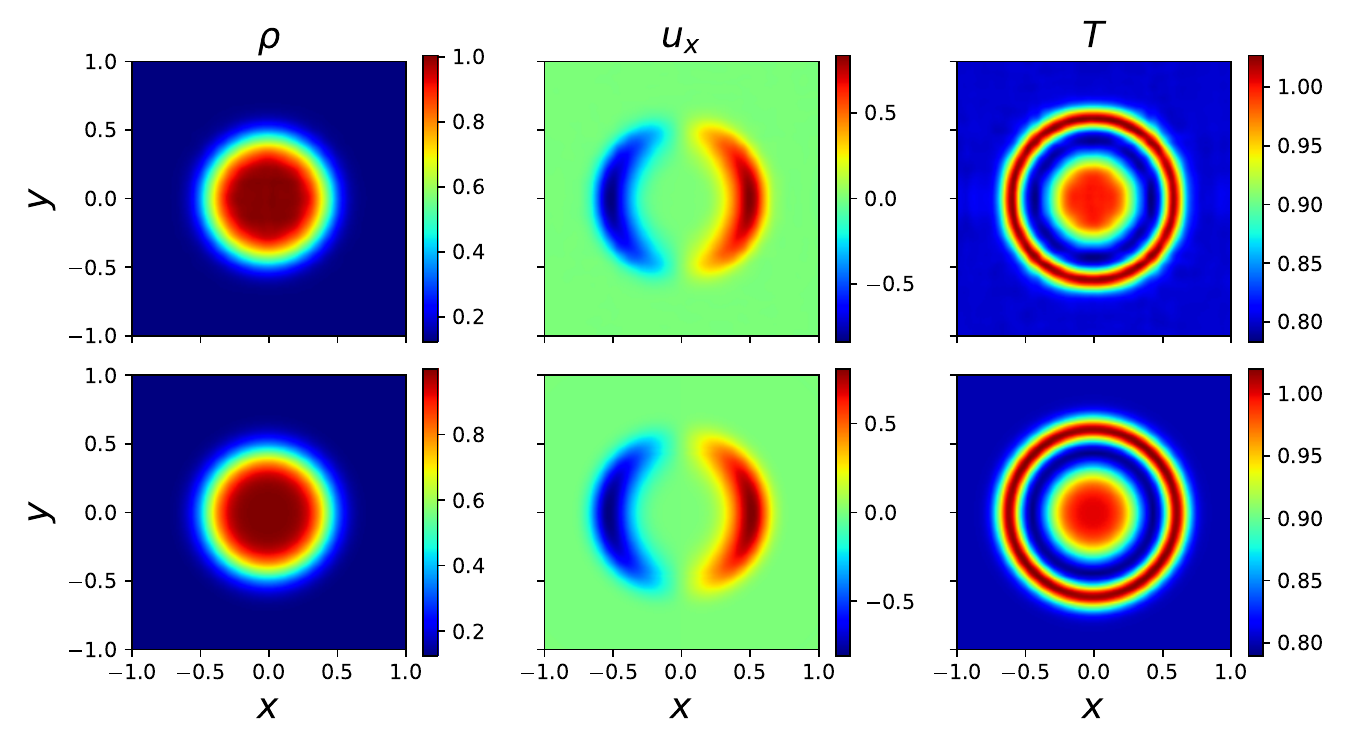}
         \caption{$\mathrm{Kn}=1.0$}
         \label{fig:2d_riemann_1.0}
     \end{subfigure}
     \begin{subfigure}[b]{0.69\textwidth}
         \centering
         \includegraphics[width=\textwidth]{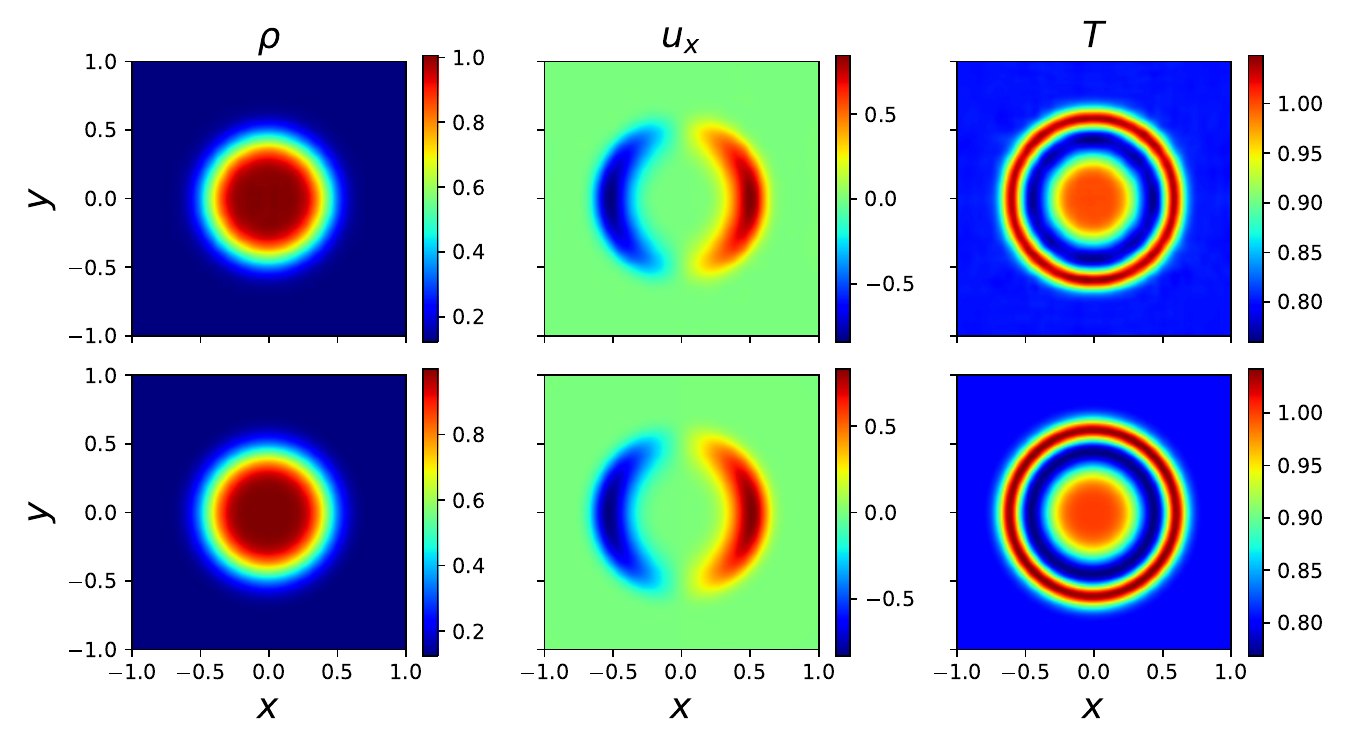}
         \caption{$\mathrm{Kn}=0.1$}
         \label{fig:2d_riemann_0.1}
     \end{subfigure}
     \begin{subfigure}[b]{0.69\textwidth}
         \centering
         \includegraphics[width=\textwidth]{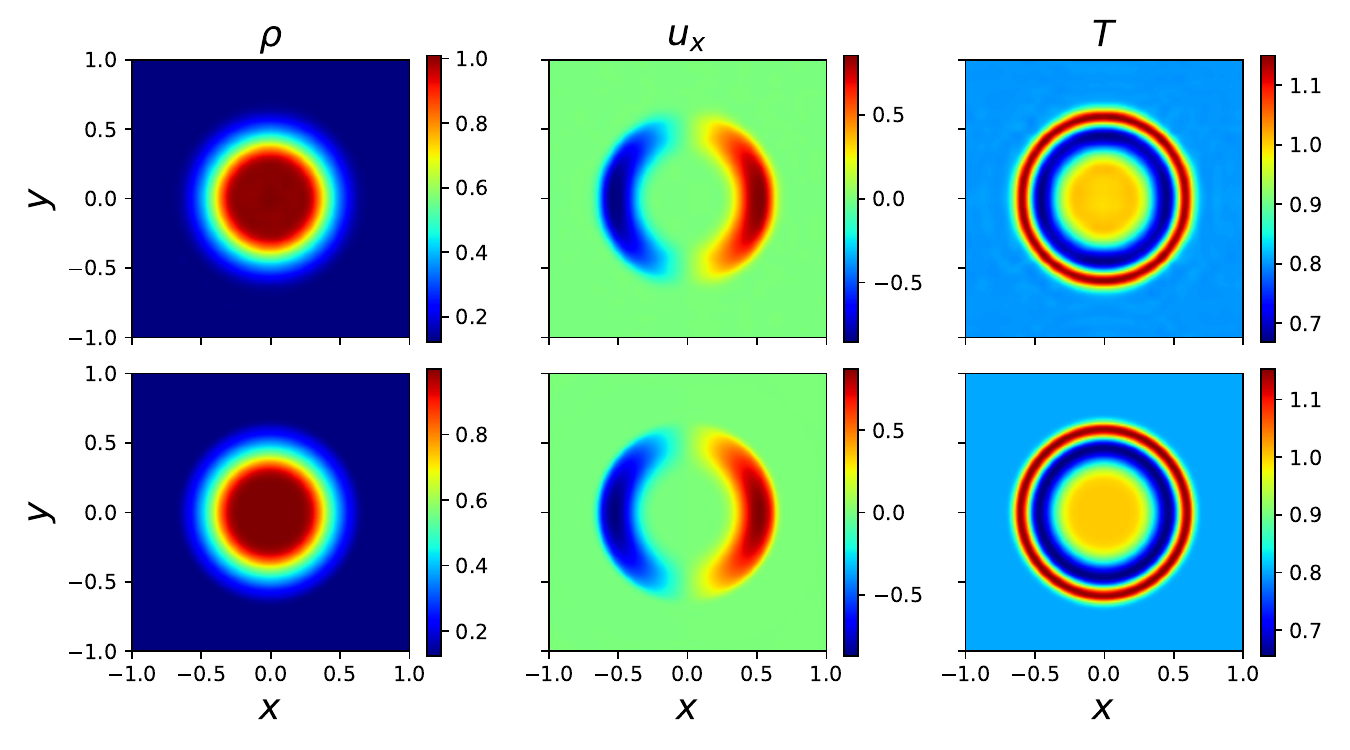}
         \caption{$\mathrm{Kn}=0.01$}
         \label{fig:2d_riemann_0.01}
     \end{subfigure}
        \caption{
            Macroscopic moments $\rho, u_x, T$ at $t=0.1$ for the 2d Riemann problem.
            Top row: neural network predictions.
            Bottom row: reference solutions.
            First column: $\rho$.
            Second column: $u_x$.
            Third column: $T$.
            Due to the symmetry, we omitted plots for $u_y$ and $u_z$.
        }
    \label{fig:2d_riemann}
\end{figure}

\begin{table}
\centering
\begin{tabular}{ |c|c|c|c| } 
 \hline
  $(t=0, t=0.1)$ & $\mathrm{Kn} = 1$ & $\mathrm{Kn} = 0.1$ & $\mathrm{Kn} = 0.01$ \\
  \hline
  $\rho$ & (4.41e-2, 2.36e-2) & (4.06e-2, 2.11e-2) & (3.59e-2, 1.78e-2) \\
  $u_x$ & (4.78e-3, 3.28e-2) & (4.07e-3, 2.89e-2) & (3.26e-3, 2.17e-2) \\
  $u_y$ & (4.34e-3, 3.14e-2) & (3.62e-3, 2.89e-2) & (3.07e-3, 2.11e-2) \\
  $T$ & (1.04e-2, 1.83e-2) & (9.86e-3, 1.61e-2) & (9.66e-3, 1.78e-2) \\
 \hline
\end{tabular}
\caption{Relative $L^2$ errors for the macroscopic moments $\rho, u_x, u_y, T$ of the 2D Riemann problem.}
\label{tab:2d_riemann}
\end{table}

\Cref{fig:2d_riemann} displays the numerical results obtained from our test.
The entire computation was completed in approximately 5 minutes and 30 seconds.
For comparison, the reference solution was generated using a grid with $N_x^2 = 160^2$ and $N_v^3 = 33^3$.
Our SPINN-BGK method successfully generated results that closely align, in qualitative terms, with the reference solution within this timeframe.
\Cref{tab:2d_riemann} presents the relative $L^2$ errors for the predicted macroscopic moments in comparison to the reference solutions.
Despite the complexity of the 2D Riemann problem, the magnitude of errors remains within the order of $O(10^{-2})$, demonstrating the effectiveness of our approach in accurately capturing the dynamics of this challenging problem.

\subsection{3D Riemann Problem}\label{sec:3d_riemann}
We finally explore the three-dimensional Riemann problem.
The spatial domain for this test is defined as $(-1, 1)^3$, where we apply a free-flow boundary condition.
Within this domain, a sphere $S$ characterized by $x^2 + y^2 + z^2 = 0.5^2$ is considered.
The initial condition is modeled as a local Maxwellian distribution with macroscopic moments $(\rho_0, {\bf u}_0, T_0)$.
Inside the sphere, the values are set to $(1, {\bf 0}, 1)$, while outside, they are $(0.375, {\bf 0}, 0.8)$.
As with the previous Riemann problems, we employ a smoothed version of the Heaviside function, $H(r^2) = (1+ \tanh(100r^2))/2$, where $r^2 = 0.5^2 - x^2 - y^2 - z^2$, to relax the discontinuity in the initial data.
The computational domain for the microscopic velocity space is established as $(-6, 6)^3$.
In each iteration, 12 collocation points per axis are sampled.

\begin{figure}
     \centering
     \begin{subfigure}[b]{0.75\textwidth}
         \centering
         \includegraphics[width=\textwidth]{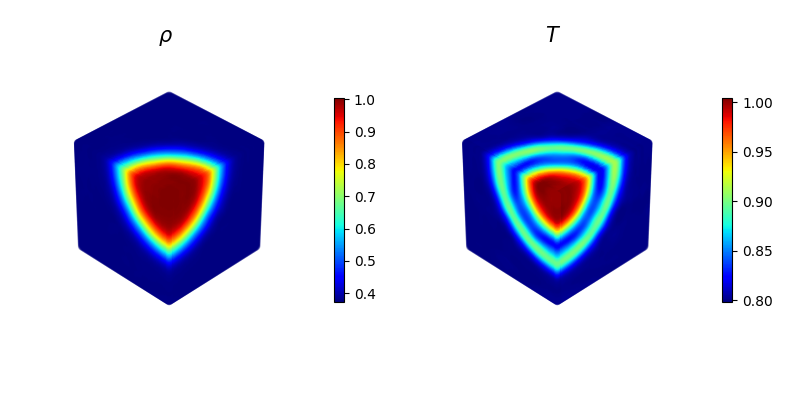}
         \caption{$\mathrm{Kn}=1.0$}
         \label{fig:3d_riemann_1.0}
     \end{subfigure}
     \begin{subfigure}[b]{0.75\textwidth}
         \centering
         \includegraphics[width=\textwidth]{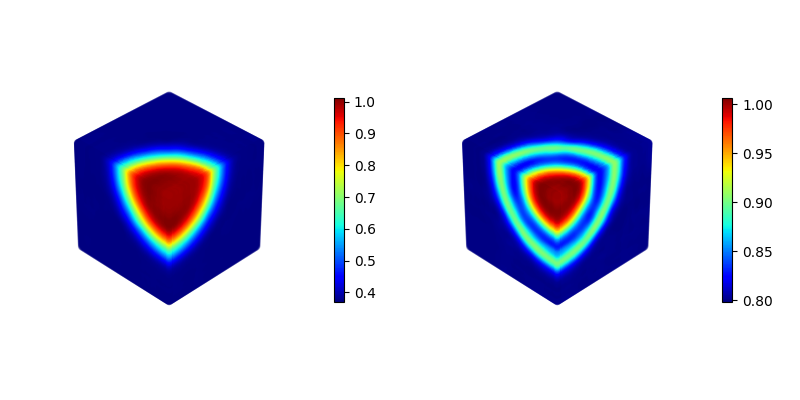}
         \caption{$\mathrm{Kn}=0.1$}
         \label{fig:3d_riemann_0.1}
     \end{subfigure}
     \begin{subfigure}[b]{0.75\textwidth}
         \centering
         \includegraphics[width=\textwidth]{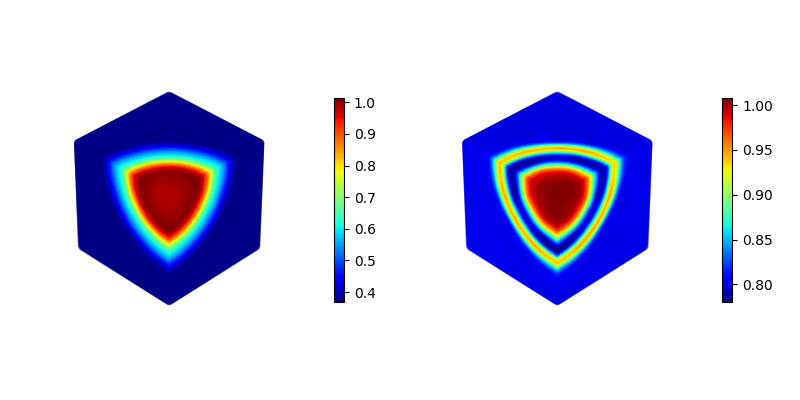}
         \caption{$\mathrm{Kn}=0.01$}
         \label{fig:3d_riemann_0.01}
     \end{subfigure}
        \caption{Predicted macroscopic moments at $t=0.1$ for the 3d Riemann problem. Left: the density $\rho$. Right: the temperature $T$.}
    \label{fig:3d_riemann}
\end{figure}

\Cref{fig:3d_riemann} displays the predicted values of $\rho$ and $T$ at $t=0.1$ within the octant $(0, 1)^2 \times (-1, 0)$.
The other octants are not displayed due to the symmetry of the problem.
The entire computation was completed in approximately 50 minutes.
Owing to the high computational costs associated with the curse of dimensionality, generating reference solutions for 3D problems was not feasible.
This challenge arises from the lack of suitable numerical schemes and the absence of open-source implementations for such complex cases.
Our research addresses this gap by successfully employing SPINN-BGK to efficiently compute a numerical solution for the 3D case on a single GPU.
This achievement demonstrates the method's capability to overcome the computational challenges inherent in high-dimensional simulations.
Based on the qualitatively accurate results obtained in \Cref{sec:1d_riemann} and \Cref{sec:2d_riemann}, we are confident that our predicted solution for the 3D case is at least qualitatively representative of the true solution.

\section{Conclusion}\label{sec:conclusion}
In this study, we have developed a separable PINN-based methodology for solving the BGK model.
The computational challenge inherent in this task arises from the high dimensionality of the equation and the necessity to compute integrals for the macroscopic moments, which significantly increase the number of network forward passes required.
To address this, we propose the SPINN-BGK, which not only reduced the number of network forward passes but also leveraged its separable structure to decrease the computational burden associated with these integrals.
A critical aspect of our approach involved addressing the potential inaccuracies that standard machine learning methods might encounter, particularly when the solution does not exhibit appropriate decay behavior for large values of $|{\bf v}|$. We effectively mitigated this issue by incorporating a Gaussian function, as defined in \cref{eq:gaussian}, into the neural network $f(\cdot, \theta_{v_p})$ for each velocity component $p \in \{x,y,z\}$.
Parameters for Gaussian functions $(\tau, \mu)$ were set to learnable parameters to automatically find optimal ones.
Moreover, we introduced a relative $L^2$ loss function, as specified in \cref{eq:relative_loss}, to dynamically adjust the weights assigned to each collocation point, enhancing the accuracy of our model.
Through a series of numerical tests, we demonstrated that SPINN-BGK is capable of producing solutions that are qualitatively in close agreement with the reference solutions for one-dimensional, two-dimensional, and three-dimensional BGK problems.

Our developments clearly point towards several extensions of great importance.
In particular, our current methodology, which relies on CP decomposition for output computation, faces challenges in efficiently processing complex geometries \cite{lee2023finite}.
Given that the velocity domain in the BGK model typically represents a product of real lines, a potential solution lies in a partial separation strategy.
This approach would involve initially segmenting the domain into time-space-velocity components and then further subdividing the velocity domain.
Such a strategy could more effectively accommodate spatial domains with intricate geometries.
In addition, the current use of backward mode AD in the PINN method for updating network parameters is memory-intensive, as it requires storing intermediate values for gradient calculation.
A re-materialization strategy could mitigate memory costs but at the expense of computational speed.
An alternative approach might involve forward mode AD, which does not necessitate storing intermediate values, although it is less efficient for gradient calculation of the loss function for parameters.
In this context, a randomized approach, as suggested in \cite{shukla2023randomized}, could offer a promising direction for research, balancing memory efficiency and computational speed.

\section*{Acknowledgement}
The work of Y. Hong was supported by Basic Science Research Program through the National Research Foundation of Korea (NRF) funded by the Ministry of Education (NRF-2021R1A2C1093579) and by the Korea government(MSIT) (RS-2023-00219980). S.Y. Cho was supported by the
National Research Foundation of Korea (NRF) grant funded by the Korea government (MSIT) (No. RS-2022-00166144).

\appendix
\section{Computation Time} \label{app:a}
\Cref{tab:computation_time} presents the computation times for solutions across various problems addressed in \Cref{sec:numerical_results}.
The second and third columns depict the computation times associated with training SPINN-BGK and generating reference solutions utilizing a high-order conservative semi-Lagrangian scheme as outlined in \cite{cho2021conservative2}, respectively.
Despite the differences in computing settings between the two methodologies, Table \ref{tab:computation_time} illustrates the effectiveness of our approach in solving BGK equations in comparison to the volumetric method.

\begin{table}
    \centering
    \begin{tabular}{|c|c|c|}
    \hline
         Problems & SPINN-BGK & Reference method in \cite{cho2021conservative2} \\
         \hline
         1d Smooth \ref{sec:1d_smooth} & 240s & 2598s \\
         1d Riemann \ref{sec:1d_riemann} & 250s & 5312s\\
         2d Smooth \ref{sec:2d_smooth} & 308s & 13764s\\
         2d Riemann \ref{sec:2d_riemann} & 331s & 36020s\\
         3d Riemann \ref{sec:3d_riemann} & 3012s & N/A \\
    \hline
    \end{tabular}
    \caption{Required computation times for \Cref{sec:numerical_results}.}
    \label{tab:computation_time}
\end{table}

\section{Computational Complexity and Knudsen Numbers}
In \Cref{fig:1d_riemann}, slight deviations are observable at both \( t = 0 \) and \( t = 0.1 \), unlike in \Cref{fig:1d_smooth}.
These deviations can be attributed to the abrupt changes in the particle density function \( f \) across \( x = 0 \), where both the magnitude and dispersion of the particle density function undergo more pronounced shifts compared to those observed in \Cref{fig:1d_smooth}.
Furthermore, for $\mathrm{Kn} = 10^0$, the deviations observed in the temperature may be linked to the distinct `shape' of the particle density function.
\Cref{fig:1d_riemann_f} displays slices of the particle density function $v_x \mapsto f$ at $t=0.1$, $x=0.15$, $v_y=v_z=0$, for $\mathrm{Kn} \in \{1.0, 0.1, 0.01\}$.
For $\mathrm{Kn} = 0.01$, the density function exhibits a bell-shaped curve.
In contrast, for $\mathrm{Kn} \in \{1.0, 0.1\}$, the function displays a bimodal distribution with two peaks, suggesting the need for a higher density of collocation points in the $v_x$ direction, thereby increasing the computational complexity.

\begin{figure}[htbp]
    \centering
    \includegraphics[width=\textwidth]{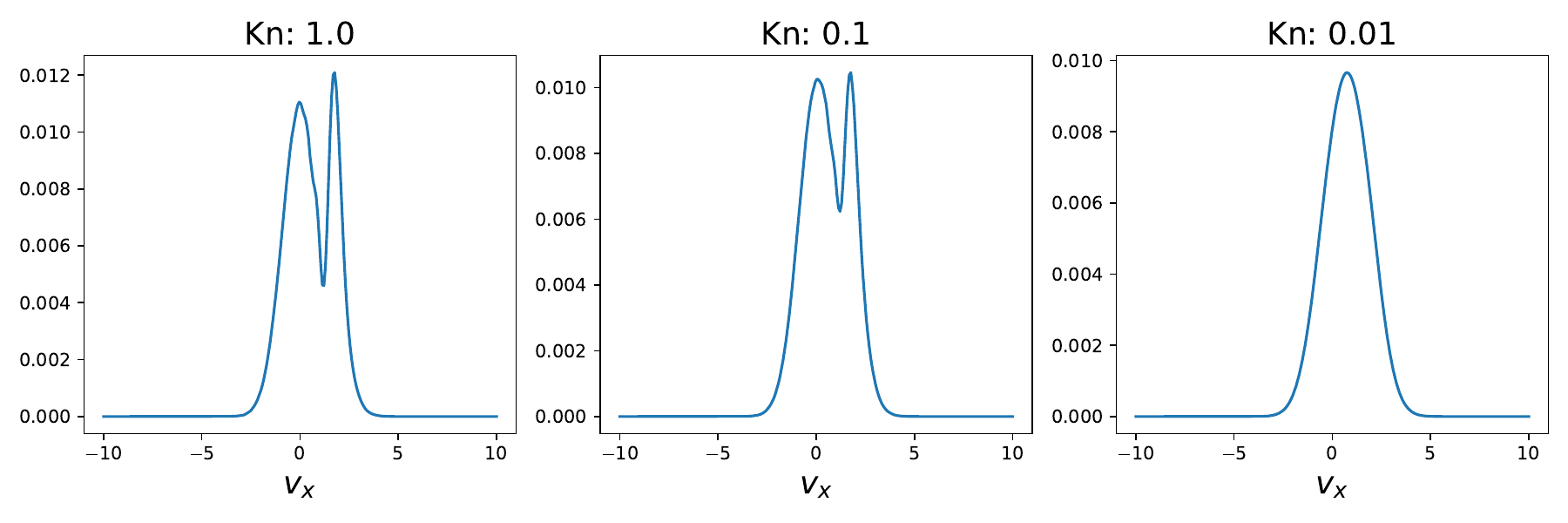}
    \caption{
        Slices of the predicted particle density $v_x \mapsto f_\theta(t, x, v_x, v_y, v_z)$ at $t=0.1$, $x=0.15$, and $v_y = v_z = 0$, for three Knudsen numbers 1.0, 0.1, and 0.01.
        Bimodal shapes would increase the computational complexity significantly, for solving the BGK model with $\mathrm{Kn} \in \{1.0, 0.1\}$.
    }
    \label{fig:1d_riemann_f}
\end{figure}

\bibliographystyle{elsarticle-harv} 
\bibliography{library}

\begin{thebibliography}{56}
\expandafter\ifx\csname natexlab\endcsname\relax\def\natexlab#1{#1}\fi
\providecommand{\url}[1]{\texttt{#1}}
\providecommand{\href}[2]{#2}
\providecommand{\path}[1]{#1}
\providecommand{\DOIprefix}{doi:}
\providecommand{\ArXivprefix}{arXiv:}
\providecommand{\URLprefix}{URL: }
\providecommand{\Pubmedprefix}{pmid:}
\providecommand{\doi}[1]{\href{http://dx.doi.org/#1}{\path{#1}}}
\providecommand{\Pubmed}[1]{\href{pmid:#1}{\path{#1}}}
\providecommand{\bibinfo}[2]{#2}
\ifx\xfnm\relax \def\xfnm[#1]{\unskip,\space#1}\fi
\bibitem[{Alekseenko et~al.(2022)Alekseenko, Martin and Wood}]{alekseenko2022fast}
\bibinfo{author}{Alekseenko, A.}, \bibinfo{author}{Martin, R.}, \bibinfo{author}{Wood, A.}, \bibinfo{year}{2022}.
\newblock \bibinfo{title}{Fast evaluation of the boltzmann collision operator using data driven reduced order models}.
\newblock \bibinfo{journal}{Journal of Computational Physics} \bibinfo{volume}{470}, \bibinfo{pages}{111526}.
\bibitem[{Bennoune et~al.(2008)Bennoune, Lemou and Mieussens}]{bennoune2008uniformly}
\bibinfo{author}{Bennoune, M.}, \bibinfo{author}{Lemou, M.}, \bibinfo{author}{Mieussens, L.}, \bibinfo{year}{2008}.
\newblock \bibinfo{title}{Uniformly stable numerical schemes for the boltzmann equation preserving the compressible navier--stokes asymptotics}.
\newblock \bibinfo{journal}{Journal of Computational Physics} \bibinfo{volume}{227}, \bibinfo{pages}{3781--3803}.
\bibitem[{Bhatnagar et~al.(1954)Bhatnagar, Gross and Krook}]{bhatnagar1954model}
\bibinfo{author}{Bhatnagar, P.L.}, \bibinfo{author}{Gross, E.P.}, \bibinfo{author}{Krook, M.}, \bibinfo{year}{1954}.
\newblock \bibinfo{title}{A model for collision processes in gases. i. small amplitude processes in charged and neutral one-component systems}.
\newblock \bibinfo{journal}{Physical review} \bibinfo{volume}{94}, \bibinfo{pages}{511}.
\bibitem[{Bird(1994)}]{bird1994molecular}
\bibinfo{author}{Bird, G.A.}, \bibinfo{year}{1994}.
\newblock \bibinfo{title}{Molecular gas dynamics and the direct simulation of gas flows}.
\newblock \bibinfo{publisher}{Oxford university press}.
\bibitem[{Boelens et~al.(2020)Boelens, Venturi and Tartakovsky}]{boelens2020tensor}
\bibinfo{author}{Boelens, A.M.}, \bibinfo{author}{Venturi, D.}, \bibinfo{author}{Tartakovsky, D.M.}, \bibinfo{year}{2020}.
\newblock \bibinfo{title}{Tensor methods for the boltzmann-bgk equation}.
\newblock \bibinfo{journal}{Journal of Computational Physics} \bibinfo{volume}{421}, \bibinfo{pages}{109744}.
\bibitem[{Boscheri and Dimarco(2020)}]{boscheri2020high}
\bibinfo{author}{Boscheri, W.}, \bibinfo{author}{Dimarco, G.}, \bibinfo{year}{2020}.
\newblock \bibinfo{title}{High order central weno-implicit-explicit runge kutta schemes for the bgk model on general polygonal meshes}.
\newblock \bibinfo{journal}{Journal of Computational Physics} \bibinfo{volume}{422}, \bibinfo{pages}{109766}.
\bibitem[{Bradbury et~al.(2018)Bradbury, Frostig, Hawkins, Johnson, Leary, Maclaurin, Necula, Paszke, Vander{P}las, Wanderman-{M}ilne and Zhang}]{jax2018github}
\bibinfo{author}{Bradbury, J.}, \bibinfo{author}{Frostig, R.}, \bibinfo{author}{Hawkins, P.}, \bibinfo{author}{Johnson, M.J.}, \bibinfo{author}{Leary, C.}, \bibinfo{author}{Maclaurin, D.}, \bibinfo{author}{Necula, G.}, \bibinfo{author}{Paszke, A.}, \bibinfo{author}{Vander{P}las, J.}, \bibinfo{author}{Wanderman-{M}ilne, S.}, \bibinfo{author}{Zhang, Q.}, \bibinfo{year}{2018}.
\newblock \bibinfo{title}{{JAX}: composable transformations of {P}ython+{N}um{P}y programs}.
\newblock \URLprefix \url{http://github.com/google/jax}.
\bibitem[{Chapman and Cowling(1990)}]{chapman1990mathematical}
\bibinfo{author}{Chapman, S.}, \bibinfo{author}{Cowling, T.G.}, \bibinfo{year}{1990}.
\newblock \bibinfo{title}{The mathematical theory of non-uniform gases: an account of the kinetic theory of viscosity, thermal conduction and diffusion in gases}.
\newblock \bibinfo{publisher}{Cambridge university press}.
\bibitem[{Chen et~al.(2023)Chen, Liang, Huang, Real, Wang, Liu, Pham, Dong, Luong, Hsieh et~al.}]{chen2023symbolic}
\bibinfo{author}{Chen, X.}, \bibinfo{author}{Liang, C.}, \bibinfo{author}{Huang, D.}, \bibinfo{author}{Real, E.}, \bibinfo{author}{Wang, K.}, \bibinfo{author}{Liu, Y.}, \bibinfo{author}{Pham, H.}, \bibinfo{author}{Dong, X.}, \bibinfo{author}{Luong, T.}, \bibinfo{author}{Hsieh, C.J.}, et~al., \bibinfo{year}{2023}.
\newblock \bibinfo{title}{Symbolic discovery of optimization algorithms}.
\newblock \bibinfo{journal}{arXiv preprint arXiv:2302.06675} .
\bibitem[{Cho et~al.(2024)Cho, Nam, Yang, Yun, Hong and Park}]{cho2024separable}
\bibinfo{author}{Cho, J.}, \bibinfo{author}{Nam, S.}, \bibinfo{author}{Yang, H.}, \bibinfo{author}{Yun, S.B.}, \bibinfo{author}{Hong, Y.}, \bibinfo{author}{Park, E.}, \bibinfo{year}{2024}.
\newblock \bibinfo{title}{Separable physics-informed neural networks}.
\newblock \bibinfo{journal}{Advances in Neural Information Processing Systems} \bibinfo{volume}{36}.
\bibitem[{Cho et~al.(2021)Cho, Boscarino, Russo and Yun}]{cho2021conservative2}
\bibinfo{author}{Cho, S.Y.}, \bibinfo{author}{Boscarino, S.}, \bibinfo{author}{Russo, G.}, \bibinfo{author}{Yun, S.B.}, \bibinfo{year}{2021}.
\newblock \bibinfo{title}{Conservative semi-lagrangian schemes for kinetic equations part ii: Applications}.
\newblock \bibinfo{journal}{Journal of Computational Physics} \bibinfo{volume}{436}, \bibinfo{pages}{110281}.
\bibitem[{Cybenko(1989)}]{cybenko1989approximation}
\bibinfo{author}{Cybenko, G.}, \bibinfo{year}{1989}.
\newblock \bibinfo{title}{Approximation by superpositions of a sigmoidal function}.
\newblock \bibinfo{journal}{Mathematics of control, signals and systems} \bibinfo{volume}{2}, \bibinfo{pages}{303--314}.
\bibitem[{Dimarco and Loubere(2013)}]{dimarco2013towards}
\bibinfo{author}{Dimarco, G.}, \bibinfo{author}{Loubere, R.}, \bibinfo{year}{2013}.
\newblock \bibinfo{title}{Towards an ultra efficient kinetic scheme. part i: Basics on the bgk equation}.
\newblock \bibinfo{journal}{Journal of Computational Physics} \bibinfo{volume}{255}, \bibinfo{pages}{680--698}.
\bibitem[{Dimarco et~al.(2018)Dimarco, Loub{\`e}re, Narski and Rey}]{dimarco2018efficient}
\bibinfo{author}{Dimarco, G.}, \bibinfo{author}{Loub{\`e}re, R.}, \bibinfo{author}{Narski, J.}, \bibinfo{author}{Rey, T.}, \bibinfo{year}{2018}.
\newblock \bibinfo{title}{An efficient numerical method for solving the boltzmann equation in multidimensions}.
\newblock \bibinfo{journal}{Journal of Computational Physics} \bibinfo{volume}{353}, \bibinfo{pages}{46--81}.
\bibitem[{Dimarco and Pareschi(2014)}]{dimarco2014numerical}
\bibinfo{author}{Dimarco, G.}, \bibinfo{author}{Pareschi, L.}, \bibinfo{year}{2014}.
\newblock \bibinfo{title}{Numerical methods for kinetic equations}.
\newblock \bibinfo{journal}{Acta Numerica} \bibinfo{volume}{23}, \bibinfo{pages}{369--520}.
\bibitem[{Ding et~al.(2021)Ding, Qiu and Shu}]{ding2021semi}
\bibinfo{author}{Ding, M.}, \bibinfo{author}{Qiu, J.M.}, \bibinfo{author}{Shu, R.}, \bibinfo{year}{2021}.
\newblock \bibinfo{title}{Semi-lagrangian nodal discontinuous galerkin method for the bgk model}.
\newblock \bibinfo{journal}{arXiv preprint arXiv:2105.02421} .
\bibitem[{Filbet and Jin(2010)}]{filbet2010class}
\bibinfo{author}{Filbet, F.}, \bibinfo{author}{Jin, S.}, \bibinfo{year}{2010}.
\newblock \bibinfo{title}{A class of asymptotic-preserving schemes for kinetic equations and related problems with stiff sources}.
\newblock \bibinfo{journal}{Journal of Computational Physics} \bibinfo{volume}{229}, \bibinfo{pages}{7625--7648}.
\bibitem[{Gamba et~al.(2019)Gamba, Jin and Liu}]{gamba2019micro}
\bibinfo{author}{Gamba, I.M.}, \bibinfo{author}{Jin, S.}, \bibinfo{author}{Liu, L.}, \bibinfo{year}{2019}.
\newblock \bibinfo{title}{Micro-macro decomposition based asymptotic-preserving numerical schemes and numerical moments conservation for collisional nonlinear kinetic equations}.
\newblock \bibinfo{journal}{Journal of Computational Physics} \bibinfo{volume}{382}, \bibinfo{pages}{264--290}.
\bibitem[{Guo et~al.(2023)Guo, Ema and Qiu}]{guo2023local}
\bibinfo{author}{Guo, W.}, \bibinfo{author}{Ema, J.F.}, \bibinfo{author}{Qiu, J.M.}, \bibinfo{year}{2023}.
\newblock \bibinfo{title}{A local macroscopic conservative (lomac) low rank tensor method with the discontinuous galerkin method for the vlasov dynamics}.
\newblock \bibinfo{journal}{Communications on Applied Mathematics and Computation} , \bibinfo{pages}{1--26}.
\bibitem[{Guo and Qiu(2024)}]{GQ24}
\bibinfo{author}{Guo, W.}, \bibinfo{author}{Qiu, J.M.}, \bibinfo{year}{2024}.
\newblock \bibinfo{title}{A conservative low rank tensor method for the vlasov dynamics}.
\newblock \bibinfo{journal}{SIAM Journal on Scientific Computing} \bibinfo{volume}{46}, \bibinfo{pages}{A232--A263}.
\newblock \DOIprefix\doi{10.1137/22M1473960}.
\bibitem[{Han et~al.(2019)Han, Ma, Ma and E}]{han2019uniformly}
\bibinfo{author}{Han, J.}, \bibinfo{author}{Ma, C.}, \bibinfo{author}{Ma, Z.}, \bibinfo{author}{E, W.}, \bibinfo{year}{2019}.
\newblock \bibinfo{title}{Uniformly accurate machine learning-based hydrodynamic models for kinetic equations}.
\newblock \bibinfo{journal}{Proceedings of the National Academy of Sciences} \bibinfo{volume}{116}, \bibinfo{pages}{21983--21991}.
\bibitem[{Hazeltine(2018)}]{hazeltine2018framework}
\bibinfo{author}{Hazeltine, R.D.}, \bibinfo{year}{2018}.
\newblock \bibinfo{title}{The framework of plasma physics}.
\newblock \bibinfo{publisher}{CRC Press}.
\bibitem[{Hitchcock(1927)}]{hitchcock1927expression}
\bibinfo{author}{Hitchcock, F.L.}, \bibinfo{year}{1927}.
\newblock \bibinfo{title}{The expression of a tensor or a polyadic as a sum of products}.
\newblock \bibinfo{journal}{Journal of Mathematics and Physics} \bibinfo{volume}{6}, \bibinfo{pages}{164--189}.
\bibitem[{Hu et~al.(2023a)Hu, Shi, Karniadakis and Kawaguchi}]{hu2023hutchinson}
\bibinfo{author}{Hu, Z.}, \bibinfo{author}{Shi, Z.}, \bibinfo{author}{Karniadakis, G.E.}, \bibinfo{author}{Kawaguchi, K.}, \bibinfo{year}{2023}a.
\newblock \bibinfo{title}{Hutchinson trace estimation for high-dimensional and high-order physics-informed neural networks}.
\newblock \bibinfo{journal}{arXiv preprint arXiv:2312.14499} .
\bibitem[{Hu et~al.(2023b)Hu, Shukla, Karniadakis and Kawaguchi}]{hu2023tackling}
\bibinfo{author}{Hu, Z.}, \bibinfo{author}{Shukla, K.}, \bibinfo{author}{Karniadakis, G.E.}, \bibinfo{author}{Kawaguchi, K.}, \bibinfo{year}{2023}b.
\newblock \bibinfo{title}{Tackling the curse of dimensionality with physics-informed neural networks}.
\newblock \bibinfo{journal}{arXiv preprint arXiv:2307.12306} .
\bibitem[{Inc.(2022)}]{matlab}
\bibinfo{author}{Inc., T.M.}, \bibinfo{year}{2022}.
\newblock \bibinfo{title}{Matlab version: 9.13.0 (r2022b)}.
\newblock \URLprefix \url{https://www.mathworks.com}.
\bibitem[{Jin and Shi(2010)}]{jin2010micro}
\bibinfo{author}{Jin, S.}, \bibinfo{author}{Shi, Y.}, \bibinfo{year}{2010}.
\newblock \bibinfo{title}{A micro-macro decomposition-based asymptotic-preserving scheme for the multispecies boltzmann equation}.
\newblock \bibinfo{journal}{SIAM Journal on Scientific Computing} \bibinfo{volume}{31}, \bibinfo{pages}{4580--4606}.
\bibitem[{Kidger and Lyons(2020)}]{kidger2020universal}
\bibinfo{author}{Kidger, P.}, \bibinfo{author}{Lyons, T.}, \bibinfo{year}{2020}.
\newblock \bibinfo{title}{Universal approximation with deep narrow networks}, in: \bibinfo{booktitle}{Conference on learning theory}, \bibinfo{organization}{PMLR}. pp. \bibinfo{pages}{2306--2327}.
\bibitem[{Kingma and Ba(2014)}]{kingma2014adam}
\bibinfo{author}{Kingma, D.P.}, \bibinfo{author}{Ba, J.}, \bibinfo{year}{2014}.
\newblock \bibinfo{title}{Adam: A method for stochastic optimization}.
\newblock \bibinfo{journal}{arXiv preprint arXiv:1412.6980} .
\bibitem[{Lee et~al.(2023a)Lee, Jang and Hwang}]{lee2023oppinn}
\bibinfo{author}{Lee, J.Y.}, \bibinfo{author}{Jang, J.}, \bibinfo{author}{Hwang, H.J.}, \bibinfo{year}{2023}a.
\newblock \bibinfo{title}{oppinn: Physics-informed neural network with operator learning to approximate solutions to the fokker-planck-landau equation}.
\newblock \bibinfo{journal}{Journal of Computational Physics} \bibinfo{volume}{480}, \bibinfo{pages}{112031}.
\bibitem[{Lee et~al.(2023b)Lee, Ko and Hong}]{lee2023finite}
\bibinfo{author}{Lee, J.Y.}, \bibinfo{author}{Ko, S.}, \bibinfo{author}{Hong, Y.}, \bibinfo{year}{2023}b.
\newblock \bibinfo{title}{Finite element operator network for solving parametric pdes}.
\newblock \href{http://arxiv.org/abs/2308.04690}{{\tt arXiv:2308.04690}}.
\bibitem[{Lewis and Miller(1984)}]{lewis1984computational}
\bibinfo{author}{Lewis, E.E.}, \bibinfo{author}{Miller, W.F.}, \bibinfo{year}{1984}.
\newblock \bibinfo{title}{Computational methods of neutron transport}.
\newblock \bibinfo{publisher}{John Wiley and Sons, Inc., New York, NY}.
\bibitem[{Li et~al.(2023)Li, Wang, Liu, Wang and Dong}]{li2023solving}
\bibinfo{author}{Li, Z.}, \bibinfo{author}{Wang, Y.}, \bibinfo{author}{Liu, H.}, \bibinfo{author}{Wang, Z.}, \bibinfo{author}{Dong, B.}, \bibinfo{year}{2023}.
\newblock \bibinfo{title}{Solving boltzmann equation with neural sparse representation}.
\newblock \bibinfo{journal}{arXiv preprint arXiv:2302.09233} .
\bibitem[{Liu and Qi(2024)}]{liu2024convergence}
\bibinfo{author}{Liu, L.}, \bibinfo{author}{Qi, K.}, \bibinfo{year}{2024}.
\newblock \bibinfo{title}{Convergence of the fourier-galerkin spectral method for the boltzmann equation with uncertainties}.
\newblock \href{http://arxiv.org/abs/2212.04083}{{\tt arXiv:2212.04083}}.
\bibitem[{Loshchilov and Hutter(2016)}]{loshchilov2016sgdr}
\bibinfo{author}{Loshchilov, I.}, \bibinfo{author}{Hutter, F.}, \bibinfo{year}{2016}.
\newblock \bibinfo{title}{Sgdr: Stochastic gradient descent with warm restarts}, in: \bibinfo{booktitle}{International Conference on Learning Representations}.
\bibitem[{Lou et~al.(2021)Lou, Meng and Karniadakis}]{lou2021physics}
\bibinfo{author}{Lou, Q.}, \bibinfo{author}{Meng, X.}, \bibinfo{author}{Karniadakis, G.E.}, \bibinfo{year}{2021}.
\newblock \bibinfo{title}{Physics-informed neural networks for solving forward and inverse flow problems via the boltzmann-bgk formulation}.
\newblock \bibinfo{journal}{Journal of Computational Physics} \bibinfo{volume}{447}, \bibinfo{pages}{110676}.
\bibitem[{Lu et~al.(2021)Lu, Pestourie, Yao, Wang, Verdugo and Johnson}]{lu2021physics}
\bibinfo{author}{Lu, L.}, \bibinfo{author}{Pestourie, R.}, \bibinfo{author}{Yao, W.}, \bibinfo{author}{Wang, Z.}, \bibinfo{author}{Verdugo, F.}, \bibinfo{author}{Johnson, S.G.}, \bibinfo{year}{2021}.
\newblock \bibinfo{title}{Physics-informed neural networks with hard constraints for inverse design}.
\newblock \bibinfo{journal}{SIAM Journal on Scientific Computing} \bibinfo{volume}{43}, \bibinfo{pages}{B1105--B1132}.
\bibitem[{Markowich et~al.(2012)Markowich, Ringhofer and Schmeiser}]{markowich2012semiconductor}
\bibinfo{author}{Markowich, P.A.}, \bibinfo{author}{Ringhofer, C.A.}, \bibinfo{author}{Schmeiser, C.}, \bibinfo{year}{2012}.
\newblock \bibinfo{title}{Semiconductor equations}.
\newblock \bibinfo{publisher}{Springer Science \& Business Media}.
\bibitem[{Mieussens(2000)}]{mieussens2000discrete}
\bibinfo{author}{Mieussens, L.}, \bibinfo{year}{2000}.
\newblock \bibinfo{title}{Discrete velocity model and implicit scheme for the bgk equation of rarefied gas dynamics}.
\newblock \bibinfo{journal}{Mathematical Models and Methods in Applied Sciences} \bibinfo{volume}{10}, \bibinfo{pages}{1121--1149}.
\bibitem[{Miller et~al.(2022)Miller, Roberts, Bond and Cyr}]{miller2022neural}
\bibinfo{author}{Miller, S.T.}, \bibinfo{author}{Roberts, N.V.}, \bibinfo{author}{Bond, S.D.}, \bibinfo{author}{Cyr, E.C.}, \bibinfo{year}{2022}.
\newblock \bibinfo{title}{Neural-network based collision operators for the boltzmann equation}.
\newblock \bibinfo{journal}{Journal of Computational Physics} \bibinfo{volume}{470}, \bibinfo{pages}{111541}.
\bibitem[{Mouhot and Pareschi(2006)}]{mouhot2006fast}
\bibinfo{author}{Mouhot, C.}, \bibinfo{author}{Pareschi, L.}, \bibinfo{year}{2006}.
\newblock \bibinfo{title}{Fast algorithms for computing the boltzmann collision operator}.
\newblock \bibinfo{journal}{Mathematics of computation} \bibinfo{volume}{75}, \bibinfo{pages}{1833--1852}.
\bibitem[{M{\"u}ller and Zeinhofer(2023)}]{muller2023achieving}
\bibinfo{author}{M{\"u}ller, J.}, \bibinfo{author}{Zeinhofer, M.}, \bibinfo{year}{2023}.
\newblock \bibinfo{title}{Achieving high accuracy with pinns via energy natural gradient descent}, in: \bibinfo{booktitle}{International Conference on Machine Learning}, \bibinfo{organization}{PMLR}. pp. \bibinfo{pages}{25471--25485}.
\bibitem[{Pareschi and Perthame(1996)}]{pareschi1996fourier}
\bibinfo{author}{Pareschi, L.}, \bibinfo{author}{Perthame, B.}, \bibinfo{year}{1996}.
\newblock \bibinfo{title}{A fourier spectral method for homogeneous boltzmann equations}.
\newblock \bibinfo{journal}{Transport Theory and Statistical Physics} \bibinfo{volume}{25}, \bibinfo{pages}{369--382}.
\bibitem[{Pareschi and Russo(2000)}]{pareschi2000numerical}
\bibinfo{author}{Pareschi, L.}, \bibinfo{author}{Russo, G.}, \bibinfo{year}{2000}.
\newblock \bibinfo{title}{Numerical solution of the boltzmann equation i: Spectrally accurate approximation of the collision operator}.
\newblock \bibinfo{journal}{SIAM journal on numerical analysis} \bibinfo{volume}{37}, \bibinfo{pages}{1217--1245}.
\bibitem[{Pieraccini and Puppo(2007)}]{pieraccini2007implicit}
\bibinfo{author}{Pieraccini, S.}, \bibinfo{author}{Puppo, G.}, \bibinfo{year}{2007}.
\newblock \bibinfo{title}{Implicit--explicit schemes for bgk kinetic equations}.
\newblock \bibinfo{journal}{Journal of Scientific Computing} \bibinfo{volume}{32}, \bibinfo{pages}{1--28}.
\bibitem[{Porteous et~al.(2021)Porteous, Laiu and Hauck}]{porteous2021data}
\bibinfo{author}{Porteous, W.A.}, \bibinfo{author}{Laiu, M.P.}, \bibinfo{author}{Hauck, C.D.}, \bibinfo{year}{2021}.
\newblock \bibinfo{title}{Data-driven, structure-preserving approximations to entropy-based moment closures for kinetic equations}.
\newblock \bibinfo{journal}{arXiv preprint arXiv:2106.08973} .
\bibitem[{Raissi et~al.(2019)Raissi, Perdikaris and Karniadakis}]{raissi2019physics}
\bibinfo{author}{Raissi, M.}, \bibinfo{author}{Perdikaris, P.}, \bibinfo{author}{Karniadakis, G.E.}, \bibinfo{year}{2019}.
\newblock \bibinfo{title}{Physics-informed neural networks: A deep learning framework for solving forward and inverse problems involving nonlinear partial differential equations}.
\newblock \bibinfo{journal}{Journal of Computational physics} \bibinfo{volume}{378}, \bibinfo{pages}{686--707}.
\bibitem[{Romano(2021)}]{romano2021openbte}
\bibinfo{author}{Romano, G.}, \bibinfo{year}{2021}.
\newblock \bibinfo{title}{Openbte: a solver for ab-initio phonon transport in multidimensional structures}.
\newblock \bibinfo{journal}{arXiv preprint arXiv:2106.02764} .
\bibitem[{Shukla and Shin(2023)}]{shukla2023randomized}
\bibinfo{author}{Shukla, K.}, \bibinfo{author}{Shin, Y.}, \bibinfo{year}{2023}.
\newblock \bibinfo{title}{Randomized forward mode of automatic differentiation for optimization algorithms}.
\newblock \bibinfo{journal}{arXiv preprint arXiv:2310.14168} .
\bibitem[{Sitzmann et~al.(2020)Sitzmann, Martel, Bergman, Lindell and Wetzstein}]{sitzmann2020implicit}
\bibinfo{author}{Sitzmann, V.}, \bibinfo{author}{Martel, J.}, \bibinfo{author}{Bergman, A.}, \bibinfo{author}{Lindell, D.}, \bibinfo{author}{Wetzstein, G.}, \bibinfo{year}{2020}.
\newblock \bibinfo{title}{Implicit neural representations with periodic activation functions}.
\newblock \bibinfo{journal}{Advances in neural information processing systems} \bibinfo{volume}{33}, \bibinfo{pages}{7462--7473}.
\bibitem[{Tsai et~al.(2023)Tsai, Chung, Ghosh, Loffeld, Choi and Belof}]{tsai2023accelerating}
\bibinfo{author}{Tsai, P.H.}, \bibinfo{author}{Chung, S.W.}, \bibinfo{author}{Ghosh, D.}, \bibinfo{author}{Loffeld, J.}, \bibinfo{author}{Choi, Y.}, \bibinfo{author}{Belof, J.L.}, \bibinfo{year}{2023}.
\newblock \bibinfo{title}{Accelerating kinetic simulations of electrostatic plasmas with reduced-order modeling}.
\newblock \bibinfo{journal}{arXiv preprint arXiv:2310.18493} .
\bibitem[{Villani(2002)}]{villani2002review}
\bibinfo{author}{Villani, C.}, \bibinfo{year}{2002}.
\newblock \bibinfo{title}{A review of mathematical topics in collisional kinetic theory}.
\newblock \bibinfo{journal}{Handbook of mathematical fluid dynamics} \bibinfo{volume}{1}, \bibinfo{pages}{3--8}.
\bibitem[{Wang et~al.(2023)Wang, Sankaran, Wang and Perdikaris}]{wang2023expert}
\bibinfo{author}{Wang, S.}, \bibinfo{author}{Sankaran, S.}, \bibinfo{author}{Wang, H.}, \bibinfo{author}{Perdikaris, P.}, \bibinfo{year}{2023}.
\newblock \bibinfo{title}{An expert's guide to training physics-informed neural networks}.
\newblock \bibinfo{journal}{arXiv preprint arXiv:2308.08468} .
\bibitem[{Xiao and Frank(2023)}]{xiao2023relaxnet}
\bibinfo{author}{Xiao, T.}, \bibinfo{author}{Frank, M.}, \bibinfo{year}{2023}.
\newblock \bibinfo{title}{Relaxnet: A structure-preserving neural network to approximate the boltzmann collision operator}.
\newblock \bibinfo{journal}{Journal of Computational Physics} , \bibinfo{pages}{112317}.
\bibitem[{Xiong et~al.(2015)Xiong, Jang, Li and Qiu}]{xiong2015high}
\bibinfo{author}{Xiong, T.}, \bibinfo{author}{Jang, J.}, \bibinfo{author}{Li, F.}, \bibinfo{author}{Qiu, J.M.}, \bibinfo{year}{2015}.
\newblock \bibinfo{title}{High order asymptotic preserving nodal discontinuous galerkin imex schemes for the bgk equation}.
\newblock \bibinfo{journal}{Journal of Computational Physics} \bibinfo{volume}{284}, \bibinfo{pages}{70--94}.
\bibitem[{Zhou et~al.(2023)Zhou, Li and Luo}]{zhou2023physics}
\bibinfo{author}{Zhou, J.}, \bibinfo{author}{Li, R.}, \bibinfo{author}{Luo, T.}, \bibinfo{year}{2023}.
\newblock \bibinfo{title}{Physics-informed neural networks for solving time-dependent mode-resolved phonon boltzmann transport equation}.
\newblock \bibinfo{journal}{npj Computational Materials} \bibinfo{volume}{9}, \bibinfo{pages}{212}.

\end{thebibliography}

\end{document}